\definecolor{teal}{rgb}{0.0, 0.5, 0.5}
\definecolor{forestgreen(web)}{rgb}{0.13, 0.55, 0.13}
\newcommand{\vep}{\varepsilon}
\theoremstyle{plain}
\newtheorem{maintheorem}{Theorem}
\newtheorem{maincorollary}[maintheorem]{Corollary}
\newtheorem{proposition}{Proposition}[section]
\newtheorem{lemma}{Lemma}[section]
\newtheorem{claim}{Claim}[section]
\newtheorem{definition}{Definition}[section]
\newtheorem{remark}{Remark}[section]
\newtheorem{example}{Example}[section]
\def\R{\mathbb{R}}
\def\Z{\mathbb{Z}}
\definecolor{bgreen}{rgb}{0.13, 0.55, 0.13}
\definecolor{forestgreenweb}{RGB}{34,139,34}
\definecolor{darkorange}{RGB}{255,140,0}
\begin{document}

\title
[On the periodic orbits of $C^0$-typical impulsive semiflows]{On the periodic orbits of $C^0$-typical \\ impulsive semiflows}

\author[M. Bessa]{M. Bessa}
\address{M\'ario Bessa\\ CEMS.UL and Departamento de Matematica da Univesidade Aberta, Portugal}
\email{mario.costa@uab.pt}

 \author[J. Siqueira]{J. Siqueira}
\address{Jaqueline Siqueira\\ Departamento de Matem\'atica, Instituto de Matem\'atica, Universidade Federal do Rio de Janeiro, Caixa Postal 68530, Rio de Janeiro, RJ 21945-970, Brazil}
\email{jaqueline@im.ufrj.br}

\author[M. J. Torres]{M. J. Torres}
\address{Maria Joana Torres, CMAT and Departamento de Matem\'atica, Universidade do Minho, 
Campus de Gualtar,
4700-057 Braga, Portugal}
\email{jtorres@math.uminho.pt}

%

\author[P. Varandas]{P. Varandas}
\address{
Paulo Varandas, CIDMA and Departamento de Matem\'atica
da Universidade de Aveiro, Campus Universitário de Santiago, 3810-193 Aveiro, Portugal }
\email{paulo.varandas@ua.pt}

\date{\today}

\keywords{Impulsive semiflows; Periodic points; Non-wandering set}
\subjclass[2010]{Primary: 37A05, 37A35.}

\begin{abstract}
Impulsive dynamical systems, modeled by a continuous 
semiflow and an impulse function, may be discontinuous
and may have non-intuitive topological properties, as the non-invariance of the non-wandering set or the non-existence of invariant probability measures.
In this paper we study dynamical features of impulsive flows parameterized by the space of impulses. 
We prove that impulsive semiflows determined by a $C^1$-Baire generic impulse are such that the set of hyperbolic periodic orbits is dense in the 
set of non-wandering points which meet the impulsive region.  As a consequence, we provide sufficient conditions for the non-wandering set of a typical impulsive semiflow (except the discontinuity set) to be invariant. 
Several applications are given concerning impulsive semiflows obtained from billiard, Anosov and geometric Lorenz flows.  
\end{abstract}

\begin{abstract}
Impulsive semiflows modeled by 
{continuous flows} and continuous impulsive functions, defined over an impulsive region, are piecewise continuous semiflows 
with piecewise smooth trajectories.  
In this paper we contribute to the topological description of typical impulsive semiflows, parameterized by both vector fields and impulses.
We prove that
 $C^0$-generic {continuous flows} generate impulsive semiflows with denseness of periodic orbits on the 
 non-wandering set.
 Additionally, we show that
$C^0$-generic impulses generate impulsive semiflows with denseness of periodic orbits on the impulsive non-wandering set.  
\end{abstract}

\maketitle



\section{Introduction}

Classical dynamical systems theory has proven essential in explaining a broad array of phenomena. 
However, real-world systems often experience sudden, discontinuous shifts, a factor that traditional dynamical systems theory tends to overlook. 
The theory of impulsive semiflows models such abrupt changes on the dynamics caused by external input and provide a quite rich playground to the study of the geometric, ergodic and topological properties of continuous-time dynamical systems under perturbations of the system by a discrete-time impulsive map. 
These systems, characterized by a continuous semiflow acting on a compact metric space $(M, d)$, a subset $D \subset M$ where the semiflow experiences abrupt perturbations, and an impulsive function $I: D \to I(D)\subset M$ that models these perturbations, frequently exhibit discontinuities, adding a layer of complexity to their behavior.
The study of the topological properties of impulsive systems has been extensively studied in the last two decades (see e.g.  \cite{BBCC} and references therein).

\medskip
In analogy to classical dynamical systems, impulsive semiflows are likely to behave quite differently depending on the regularity of the unperturbed flows and the space of impulses. In \cite{STV1}, we initiated the study of the geometric and topological features for $C^1$-typical impulsive flows by proving that, given an initial smooth flow, there exists a $C^1$-Baire generic subset of impulses (in the space of $C^1$ diffeomorphisms of $D$ onto its image)
such that the corresponding impulsive semiflow admits a dense set of periodic orbits on the part of the non-wandering set that intersects the impulsive region. 
The methods, relying on the extension of the concepts of hyperbolicity and connecting lemmas to $C^1$-impulsive semiflows (possibly with discontinuities), depend strongly on the $C^1$-topology in the space of impulses. 

\medskip
In this paper we consider impulsive semiflows  generated by {continuous flows} and impulses, chosen as homeomorphisms. 
 Endowing the space of {flows} and impulses with the $C^0$-topology, a first natural question is to ask whether a typical impulsive semiflow (usually piecewise continuous) admits invariant measures or periodic points. 
The denseness of periodic orbits in the non-wandering set, often known as general density theorem, was obtained first by Pugh for $C^1$-generic diffeomorphisms \cite{Pu}, prior to the attempts to address this problem for $C^0$-generic homeomorphisms \cite{Akin3,CMN,H,PPSS}.
In the case of homeomorphisms, the notion of permanent periodic point replaces hyperbolicity as the key ingredient to deduce a $C^0$-generic lower semicontinuity of the closure of the space of periodic orbits. 
In the case of impulsive semiflows, the results on the abundance of periodic orbits under $C^0$-perturbations are of substantially different nature whenever one considers either the {flows} or the impulses. This is due to the fact that perturbations on {flows} have a global impact while perturbations of impulses affect only the impulsive region, and will not affect non-wandering points that do not meet the impulsive region (compare the statements of Theorems~\ref{thmA} and ~\ref{thmA-flows} and see the examples in Subsection~\ref{sec:examples}).

\medskip
 This paper is organized as follows. In Section~\ref{Impulsivesemi} we introduce the class of impulsive semiflows considered here, and the topologies 
 in the space of impulses and flows.
Section~\ref{sec:state} is devoted to the statement of the main results, considering perturbations of the impulses (Subsection~\ref{subsec:C0impulses}) 
and of the flows (Subsection~\ref{subsec:C0flows}). 
In Section~\ref{sec:prelimi} we introduce the concept of Poincar\'e 
map
 for this class of impulsive semiflows and prove several $C^0$-perturbation lemmas (including $C^0$-closing lemmas and creation of periodic attractors) both for flows and impulses. We also prove that  $C^0$-generic flows generate impulsive semiflows whose periodic orbits are all permanent, and that a similar conclusion holds for $C^0$-generic impulses, while considering periodic orbits that intersect the impulsive region.
The proofs of the main results are summoned in Sections~\ref{sec:proof1} and ~\ref{sec:proof2}. Finally, in Sections ~\ref{sec:examples} and ~\ref{sec:finalc} we provide several examples and discuss some possible future research directions, respectively.


\section{Setting}\label{Impulsivesemi}

In this paper, impulsive semiflows are determined by an initial continuous flow and an impulsive map. The concept of impulsive semiflow is introduced in Subsection~\ref{subsec:defimmpulsive}. 
In 
Subsections \ref{subsec:subsecspacemapspert}
and~\ref{subsec:subsecspaceflows} 
we describe the classes of spaces of impulsive maps and flows that will be considered.

\subsection{Impulsive semiflows}\label{subsec:defimmpulsive}

Let $M$ be a $d$-dimensional
Riemannian closed manifold ($d\ge 3$).
We will denote by $\text{dist}(\cdot,\cdot)$ the distance induced by the Riemannian structure.
A $C^0$ flow is a $C^0$ map $\varphi\colon M\times \mathbb{R}\rightarrow M$ such that 
\begin{itemize}
\item $\varphi(0,x)=x$ for all $x\in M$ and 
\item $\varphi(t+s, x)=\varphi(s,\varphi(t,x))$ for all $x\in M$ and $t,s\in\mathbb{R}$. 
\end{itemize}

Given a non-empty 
set  $D\subset M$ we will refer to any continuous map $I:D \to M$ so that 
{$\overline{I(D)} \cap \overline{D}=\emptyset$}
 as an \emph{impulsive map}.  
 Under the above conditions we say that $(M,\varphi,D,I)$ is an \emph{impulsive dynamical system}  which generates an impulsive semiflow, possibly with discontinuities, as follows.
Let $\tau_1:M\to~(0,+\infty]$ be the first hitting time function to the set $D$ given by
\begin{equation}\label{eqtau1}
    \tau_1(x)=\left\{
\begin{array}{ll}
\inf\left\{t> 0 \colon \varphi_t(x)\in D\right\} ,& \text{if } \varphi_t(x)\in D\text{ for some }t>0;\\
+\infty, & \text{otherwise,}
\end{array}
\right.
\end{equation}
which is known to be upper semicontinuous \cite{C04}. 
The subsequent impulsive times are defined in terms of $\tau_1$ applied to the point obtained after the impulse. 
More precisely, setting
$$
\gamma_x(t)=\varphi_t(x), \qquad \forall \,  0\le t<\tau_{1}(x)  \quad \mbox{and} \quad \gamma_x(\tau_1(x))= I(\varphi_{\tau_{1}(x)}(x)),
$$
the remainder of the {impulsive trajectory} and {impulsive times} $(\tau_n(x))_{n\ge 2}$ 
(possibly finitely many) of the point $x\in M$ are defined recursively by 
\begin{displaymath}
\gamma_x(\tau_{n}(x))=I(\varphi_{\tau_n(x)-\tau_{n-1}(x)}(\gamma_x({\tau_{n-1}(x)})))
\end{displaymath}
and
\begin{displaymath}
\tau_{n+1}(x)=\tau_n(x)+\tau_1(\gamma_x(\tau_n(x))).
\end{displaymath}
In particular, 
\begin{equation}\label{def:eq:impulsive}
\gamma_x(t)=\varphi_{t-\tau_n(x)}(\gamma_x(\tau_n(x)))
\quad\text{whenever} \; \tau_n(x)\le t<\tau_{n+1}(x).
\end{equation}
  We observe that under the standing assumption that 
  $\overline{I(D)} \cap \overline{D}=\emptyset$, it is easy to check that 
 $$
 \sup_{n\ge 1}\,\{\tau_n(x)\}=+\infty
    \quad \text{for every $x\in M$,}
 $$
 hence the impulsive trajectories are defined for all positive $t$.

 \smallskip
 The \emph{impulsive semiflow} $\psi_{\varphi,I}$ associated to $(M, \varphi, D, I)$  is now defined as
\begin{equation}\label{def:eq:impulsive2}
\begin{array}{cccc}
        \psi_{\varphi,I}:  &  \mathbb{R}^+_0 \times M & \longrightarrow & M \\
        & (t,x) & \longmapsto & \gamma_x(t), \\
        \end{array}
\end{equation}
where $\gamma_x$ stands for the impulsive trajectory of $x$ determined by $(M,\varphi,D, I)$ and described by ~\eqref{def:eq:impulsive}. 
The arguments in   \cite[Proposition 2.1]{B} guarantee that
$\psi_{\varphi ,I}$ is indeed a piecewise continuous semiflow, whose trajectories are 
piecewise smooth. 

The main goal  of this paper is to describe the set of periodic orbits of generic  impulsive semiflows when one considers the dependence on the impulsive map $I$ and flow $\varphi$.

 \smallskip
The \emph{orbit of} $x\in M$ by the impulsive semiflow $\psi_{\varphi,I}$ is the set $O_{x}=\{\gamma_x(t) \colon t\ge 0\}$. 
We say that
a point   $x \in M$ is \emph{non-wandering}  if, for every neighborhood $U$ of $x$ and any $T>0$, there exists $t \geq T$ such that 
\[
(\psi_{\varphi,I})_t^{-1}(U) \cap U \neq \emptyset.
\]
\color{black}
We denote by  $\Omega(\psi_{\varphi,I})$ the set of the non-wandering-points for the semiflow $\psi_{\varphi,I}$.

\subsection{Space of impulsive maps}
\label{subsec:subsecspacemapspert}

In this subsection we introduce the space of impulsive semiflows, parameterized by their impulsive maps.
Given  two smooth codimension one submanifolds $D, \hat D \subset M$ which are homeomorphic and so that $\overline{\hat D} \cap \overline{D}=\emptyset$, 
consider 
the space
\begin{equation}\label{eq:defimpT}
\mathscr I_{D,\hat D} = \text{Homeo}(D,\hat D)
\end{equation}
endowed with the $C^0$-distance.  We observe that $\mathscr I_{D,\hat D}$ is a Baire space.


\medskip
In the context of perturbations of the impulsive maps on an impulsive region $D$ it is a natural condition to require that the
continuous flow 
 $\varphi$ is such that  
 $D$ is a codimension one 
smooth submanifold of $M$ transversal to the flow direction and that
\begin{equation}
    \label{eq:noncompact}\tag{H}
    \overline{D} \cap \text{Sing}(\varphi)=\emptyset,
\end{equation}
where 
$\text{Sing}(\varphi)$ stands for the equilibrium points of $\varphi$.

\smallskip
When $\varphi$ is a Lipschitz continuous flow satisfying ~\eqref{eq:noncompact}
and $I\in \mathscr I_{D,\hat D}$ then 
 there exists $\xi>0$ so that for every distinct, $x_1,x_2\in D\cup \hat D$ one has 
$
\{\varphi_t(x_1) \colon  |t|\le \xi\}\cap \{\varphi_t(x_2) \colon  |t| \le \xi\}=\emptyset.
$
Additionally,  given  $I\in \mathscr I_{D,\hat D}$, any discontinuity point of  $\psi_{\varphi,I}$ in the interior of $\hat D$  arises as a point mapped by the original flow $\varphi$ on the boundary of the impulsive region $D$.

\subsection{Space of  flows}\label{subsec:subsecspaceflows}

\medskip
One of our goals is to understand the dependence of an impulsive semiflow on the underlying flow, given the impulsive regions. Let us introduce some notation.
 Consider the space $\mathscr{F}^{0}(M)$ of continuous flows and the space $\mathscr{F}^{0,1}(M)$ of Lipschitz continuous flows generated by Lipschitz continuous vector fields in 
 $\mathfrak{X}^{0,1}(M)$.
We consider the compact-open topology $\tau$ in $\mathscr{F}^0(M)$ which is the smallest topology such that the map
\begin{equation*}
\begin{array}{crcl}
\Gamma\colon & \mathscr{F}^0(M) & \longrightarrow & C^0([a,b]\times M,M) \\& \varphi& \longmapsto & \varphi\Big|_{[a,b]\times M}
\end{array}
\end{equation*}
is continuous where, as usual, $C^0([a,b]\times M,M)$ is equipped with the distance:
\begin{equation}\label{d1}
\mathrm{d}_2\left(\varphi_1\Big|_{[a,b]\times M},\varphi_2\Big|_{[a,b]\times M}\right)=\max_{t\in[a,b],\,  x\in M}d(\varphi_1(x,t),\varphi_2(x,t)).
\end{equation}
It is well known that $\tau$ does not depend on $a,b$.

\medskip
Now, consider two smooth codimension one submanifolds  $D, \hat D \subset M$ so that 
$\overline{\hat D} \cap \overline{D} = \emptyset$ and let us define
the  space \begin{equation}
\label{def:spaceofVectorFields}
\mathfrak V_{D,\hat D} = \overline{\Big\{\varphi \in \mathscr{F}^{0,1}(M) \colon X= \dfrac{d}{dt}(\varphi(t,x))|_{t=0}, \  \overline{D} \pitchfork X \ \text{and} \  \overline{\hat D} \pitchfork X   \Big\}}, 
\end{equation}
endowed with the $\tau$-topology (here $\pitchfork$ is used as usual to define transversality). Note that the space $\mathfrak V_{D,\hat D} $ corresponds to the  closure 
in $\mathscr{F}^0(M)$
with respect to the $\tau$ topology of the space of  continuous flows, whose generating vector field is transversal to both $D$ and  $\hat{D}$. 
We point out that $\mathfrak V_{D,\hat D}$
is a Baire space (cf. Proposition~\ref{Baire}).

\color{black}

\section{Statement of the main results}
\label{sec:state}

This section is devoted to the statement of the main results, concerning the  
set of periodic orbits
of typical impulsive semiflows. In Subsection~\ref{subsec:C0impulses} we fix a Lipschitz continuous flow 
and consider the dependence on the impulsive map $I$.
In Subsection~\ref{subsec:C0flows} we assume that 
an impulsive map $I$ is given and  consider the dependence of the impulsive semiflow 
on the {flow}.
Throughout let $M$ be a compact Riemannian manifold of dimension $d\ge 3$.

\subsection{$C^0$-Baire generic impulses}
\label{subsec:C0impulses}

The first result concerns the set of periodic
orbits of impulsive semiflows obtained from a fixed Lipschitz continuous flow and typical impulsive maps.

\begin{maintheorem}\label{thmA}
Let $\varphi \in \mathcal F^{0,1}(M)$ 
and $D,\hat D\subset M$ be 
smooth codimension one submanifolds transversal to the flow direction 
such that ~\eqref{eq:noncompact} holds.
There exists a $C^0$-Baire generic subset 
$\mathscr R_\varphi
\subset \mathscr I_{D,\hat D}$
 of impulses
such that 
\begin{equation}
\label{eq:thmAeq}
\overline{Per(\psi_{\varphi,I})} \cap \mathring{D} 
= 
{\Omega(\psi_{\varphi,I}) \cap \mathring{D}} 
    \end{equation} 
for every 
$I \in \mathscr R_\varphi$, 
where ${Per(\psi_{\varphi,I})}$ denotes the set of periodic orbits of $\psi_{\varphi,I}$,
and $\mathring{D}$ stands for the interior of $D$.
\end{maintheorem}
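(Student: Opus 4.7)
The inclusion $\overline{Per(\psi_{X,I})}\cap\mathring D \subseteq \Omega(\psi_{X,I})\cap\mathring D$ is immediate, since periodic points are non-wandering and the non-wandering set is closed. The plan is therefore to secure the reverse inclusion on a residual set of impulses, via a Baire-category argument that combines the two $C^0$-perturbation ingredients developed in Section~\ref{sec:prelimi}: a $C^0$-closing lemma for impulses, and the $C^0$-genericity of impulses all of whose periodic orbits meeting $D$ are permanent.

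Fix a countable basis $\{V_n\}_{n\in\mathbb N}$ of open subsets of $\mathring D$ and, for each $n$, set
\[
\mathcal A_n = \{I\in\mathscr I_{D,\hat D}\colon V_n\cap Per(\psi_{X,I})\neq\emptyset\}
\quad\text{and}\quad
\mathcal B_n = \{I\in\mathscr I_{D,\hat D}\colon V_n\cap\Omega(\psi_{X,I})=\emptyset\}.
\]
I would first argue that $\mathcal B_n$ is $C^0$-open, using upper semicontinuity of $I\mapsto \Omega(\psi_{X,I})\cap\mathring D$, which is inherited from continuity properties of the underlying flow since points of $V_n\subset\mathring D$ lie away from the discontinuity locus of $\psi_{X,I}$. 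Setting $\mathcal G_n := \operatorname{int}(\mathcal A_n)\cup\mathcal B_n$, the intersection $\mathscr R_X := \bigcap_n \mathcal G_n$ will be $C^0$-residual provided each $\mathcal G_n$ is $C^0$-open and $C^0$-dense. Unwinding the definitions, if $I\in\mathscr R_X$ and $x\in\Omega(\psi_{X,I})\cap\mathring D$, then every basis element $V_n$ containing $x$ must fall inside $\operatorname{int}(\mathcal A_n)$, hence meet $Per(\psi_{X,I})$, whence $x\in\overline{Per(\psi_{X,I})}$, as desired.

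Density of $\mathcal G_n$ is where the two Section~\ref{sec:prelimi} ingredients are brought to bear. Let $I\notin \mathcal B_n$ and $\varepsilon>0$; pick $x\in V_n\cap \Omega(\psi_{X,I})$ and apply the $C^0$-closing lemma for impulses to produce $I'\in\mathscr I_{D,\hat D}$ with $\|I'-I\|_{C^0}<\varepsilon/2$ admitting a periodic point inside $V_n$. One then refines $I'$ to an impulse $I''$ at $C^0$-distance less than $\varepsilon/2$ from $I'$ lying in the residual subset of impulses whose periodic orbits meeting $D$ are permanent; choosing the refinement small enough to preserve the periodic point in the open set $V_n$, permanence then forces $I''\in\operatorname{int}(\mathcal A_n)$, and so a point of $\operatorname{int}(\mathcal A_n)$ is located within $\varepsilon$ of $I$.

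The main obstacle I anticipate is executing this last perturbation step cleanly. The closing lemma yields a periodic point that may \emph{a priori} fail to be permanent, and the subsequent refinement into the permanence-residual set must be controlled tightly enough that the created periodic orbit is neither destroyed nor displaced out of $V_n$. This requires that the perturbative machinery of Section~\ref{sec:prelimi} be applied with enough precision to convert a ``transient'' closing-lemma periodic orbit into a permanent one still intersecting $V_n$, so that the openness and density of $\mathcal G_n$ can be established jointly rather than in conflict.
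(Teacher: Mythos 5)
Your proposal is a genuinely different Baire-category framework from the paper's: a direct Pugh/Coven--Madden--Nitecki style ``basis decomposition,'' whereas the paper builds the residual set $\mathscr R_X$ out of the continuity points of the set-valued map $\Gamma(I)=\overline{\text{Per}(\psi_{X,I})}\cap\mathring D$, using Kuratowski's theorem together with the lower semicontinuity supplied by the permanence of Proposition~\ref{perm2}. Both routes rest on the same two Section~\ref{sec:prelimi} ingredients (Lemma~\ref{cl2} and Proposition~\ref{perm2}), and yours would yield a slightly more explicit description of $\mathscr R_X$; however, as written it has two genuine gaps.

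First, the definition of $\mathcal A_n$ uses $\text{Per}(\psi_{X,I})$ rather than $\overline{\text{Per}(\psi_{X,I})}$. As the paper points out just after the theorem, periodic orbits of $\psi_{X,I}$ never meet $D$, so for $V_n\subset\mathring D$ one has $V_n\cap\text{Per}(\psi_{X,I})=\emptyset$ identically, which makes $\mathcal A_n$ empty and trivializes your $\mathscr R_X$ in the wrong direction. The same slip appears in the density step, where you say the closing lemma produces ``a periodic point inside $V_n$''; that is impossible. What the closing lemma actually produces is a periodic impulsive orbit that hits $D$ at a point near $p\in V_n$; that hitting point lies in $\overline{\text{Per}(\psi_{X,J})}\cap\mathring D$ even though it is not periodic. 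Replacing $\text{Per}$ by $\overline{\text{Per}}$ throughout repairs both the definition and the density argument.

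Second, and more seriously, the openness of $\mathcal B_n$ is asserted but not proved, and the proposed justification does not work. Continuity of the underlying flow $\varphi$ does not give upper semicontinuity of $I\mapsto\Omega(\psi_{X,I})\cap\mathring D$; a $C^0$-small change of the impulse can create recurrence at points of $\mathring D$ that were previously wandering ($\Omega$-explosion), which is exactly the failure of openness of $\mathcal B_n$. The standard remedy --- and the one the paper itself uses in the proof of Proposition~\ref{perm2} --- is to work with the $C^0$-interior $\mathring{\mathcal B}_n$ and set $\mathcal G_n=\operatorname{int}(\mathcal A_n)\cup\mathring{\mathcal B}_n$. Openness is then automatic, the density argument changes only at the margin (if $I\in\mathcal B_n\setminus\mathring{\mathcal B}_n$, every neighborhood of $I$ contains some $J$ with $V_n\cap\Omega(\psi_{X,J})\neq\emptyset$, and you run the closing-plus-permanence step from $J$), and the final unwinding goes through unchanged because $I\notin\mathcal B_n$ still implies $I\notin\mathring{\mathcal B}_n$. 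With both fixes your strategy would be correct and would give a viable alternative to the paper's Kuratowski-based proof.
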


Some comments are in order. 
Firstly, as $(\psi_{{\varphi},I})_t(x)=\varphi_t(x)$ for every $x\in D$ and for every small $t>0$, periodic orbits under $\psi_{{\varphi},I}$ never intersect $D$.  Secondly, the conclusion of Theorem~\ref{thmA} cannot be written using the landing region 
$\hat D$,
as there exist 
$C^1$-open sets of impulses for which the equality
$\overline{{{Per(\psi_{{\varphi},I})}} \cap \hat {D}} = \Omega(\psi_{{\varphi},I}) \cap \hat D$ fails (see e.g. \cite{STV1}).
The next result is a direct consequence of the continuous dependence on initial conditions for the Lipschitz continuous flow and the proof follows closely Corollary~B in \cite{STV1}.

\begin{maincorollary}\label{corA}
Given $\varphi \in\mathcal F^{0,1}(M)$ and 
$D,\hat D\subset M$ 
smooth codimension one submanifolds transversal to the flow direction
such that ~\eqref{eq:noncompact} holds,
the following hold:
\begin{enumerate}
\item 
if $I_0\in  \mathscr I_{D,\hat D}$ is such that
  $\Omega(\psi_{\varphi,I_0}) \cap \partial D =\emptyset$ 
 then there exist $\delta>0$,
an open neighborhood $\mathcal V$ of $I_0$ and  a Baire generic subset 
$\mathscr R\subset \mathcal V$ so that, for every   $I\in \mathscr R$
one can write the non-wandering set $\Omega(\psi_{{\varphi},I})$ as a (possibly non-disjoint) union
$ 
\Omega(\psi_{{\varphi},I}) =  \overline{Per(\psi_{{\varphi},I})} \cup \, \Omega_2(\varphi, D),
$ 
where 
$\Omega_2(\varphi, D)\subset \Omega(\psi_{{\varphi},I})$ 
 is a $\varphi$-invariant set 
which does not intersect a $\delta$-neighborhood of the cross-section $D$. 
Moreover, the set $\Omega(\psi_{{\varphi},I})\setminus D$ is a $\psi_{{\varphi},I}$-invariant subset of $M$.
%
%
\smallskip
\item if $\varphi$ is minimal then there exists a Baire generic subset $\mathscr R\subset \mathscr I_{D,\hat D}$  so that, for every $I\in \mathscr R$, the set of periodic orbits is dense in $\Omega(\psi_{{\varphi},I})$.
Moreover, the set $\Omega(\psi_{{\varphi},I})\setminus D$ is a $\psi_{{\varphi},I}$-invariant subset of $M$.
\end{enumerate}
\end{maincorollary}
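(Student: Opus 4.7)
The plan is to adapt the strategy of Corollary~B in \cite{STV1} to the present $C^0$ setting by combining Theorem~\ref{thmA} with continuous dependence on initial conditions of the Lipschitz flow $\varphi$, and the upper semicontinuity of the first hitting time $\tau_1$.

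For item~(1), the starting observation is that the hypothesis $\Omega(\psi_{I_0}) \cap \partial D = \emptyset$, together with compactness of both sets, furnishes some $\delta > 0$ with $\text{dist}(\Omega(\psi_{I_0}), \partial D) > 2\delta$. A semicontinuity argument — combining upper semicontinuity of $\tau_1$, continuity of $\varphi$, and the transversality of the flow to $D$ — should yield an open neighborhood $\mathcal{V}$ of $I_0$ such that $\Omega(\psi_{X,I}) \cap B(\partial D,\delta) = \emptyset$ for every $I \in \mathcal{V}$. Intersecting $\mathcal{V}$ with the generic set provided by Theorem~\ref{thmA} produces a Baire generic $\mathscr{R} \subset \mathcal{V}$ with $\overline{Per(\psi_{X,I})} \cap \mathring{D} = \Omega(\psi_{X,I}) \cap \mathring{D}$ for every $I\in\mathscr{R}$.

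I would then decompose the non-wandering set according to whether the forward impulsive orbit meets $D$, setting
\begin{equation*}
\Omega_2(\varphi,D) \;=\; \bigl\{\,x \in \Omega(\psi_{X,I}) : O_x \cap D = \emptyset\,\bigr\}.
\end{equation*}
Points of $\Omega_2$ undergo no impulses, so their $\psi_{X,I}$-orbit coincides with the $\varphi$-orbit; this makes $\Omega_2$ into a $\varphi$-invariant subset, and the uniform separation of the previous step places $\Omega_2$ outside a $\delta$-neighborhood of $D$. Conversely, if $x \in \Omega(\psi_{X,I}) \setminus \Omega_2$, then the first hitting point $y = \varphi_{\tau_1(x)}(x)$ lies in $\mathring{D}$. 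A short backward-flow argument, together with the fact that the semiflow is continuous off $D$, transports the non-wandering property from $x$ to $y$, so $y \in \Omega(\psi_{X,I}) \cap \mathring{D}$. Theorem~\ref{thmA} then provides periodic points $y_n \to y$; pulling them back by $\varphi_{-\tau_1(x)}$ (well defined on a small neighborhood of $y$ whose backward orbit over this time interval is disjoint from $D$) produces periodic points $x_n \to x$, so $x \in \overline{Per(\psi_{X,I})}$. The invariance of $\Omega(\psi_{X,I}) \setminus D$ follows from the standard semiflow argument, now valid because the only discontinuities of $\psi_{X,I}$ are concentrated on $D$ itself. Item~(2) is then essentially a corollary: minimality of $\varphi$ forces every forward $\varphi$-orbit to enter $\mathring{D}$, so $\Omega_2(\varphi,D) = \emptyset$, and the same approximation argument of item~(1) gives density of periodic orbits in the whole of $\Omega(\psi_{X,I})$ for $I$ in the generic set of Theorem~\ref{thmA}.

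The main obstacle I anticipate is establishing the uniform separation $\Omega(\psi_{X,I}) \cap B(\partial D,\delta) = \emptyset$ on a neighborhood of $I_0$. Impulsive semiflows perturbed in $C^0$ enjoy no a priori semicontinuity of the non-wandering set, and the argument must exploit the transversality of $\varphi$ to $D$ together with the precise geometry of orbits near $\partial D$ (where $\tau_1$ may fail to be continuous) in order to track where perturbed trajectories can reaccumulate. Once this geometric control is secured, the remainder is a clean combination of Theorem~\ref{thmA} with continuous dependence on initial data.
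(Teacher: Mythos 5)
Your high-level strategy matches the route the paper indicates (combining Theorem~\ref{thmA} with continuous dependence on initial conditions and semicontinuity of $\tau_1$, modeled on Corollary~B of \cite{STV1}), but there are genuine gaps in the present write-up that prevent it from being a complete proof.

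First, your definition $\Omega_2 = \{x \in \Omega(\psi_{X,I}) : O_x \cap D = \emptyset\}$ is only argued to be \emph{forward} $\varphi$-invariant. You assert that since the $\psi_{X,I}$-orbit of such a point coincides with its forward $\varphi$-orbit, $\Omega_2$ is $\varphi$-invariant, but nothing you say controls $\varphi_{-t}(x)$ for $t>0$. To place $\varphi_{-t}(x)$ in $\Omega_2$ one must show both that $\varphi_{-t}(x) \in \Omega(\psi_{X,I})$ and that its forward impulsive orbit avoids $D$; neither is automatic. In fact, if the backward $\varphi$-orbit of $x$ approaches the upstream side of $D$, then $\varphi_{-t}(x)$ has a small first hitting time, so $O_{\varphi_{-t}(x)}\cap D \neq \emptyset$ and your $\Omega_2$ is not backward $\varphi$-invariant. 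The corollary asks for a $\varphi$-invariant set, so the definition of $\Omega_2$ needs to be revised (e.g., to a maximal $\varphi$-invariant subset) and the coverage $\Omega = \overline{Per}\cup\Omega_2$ re-established for that modified set.

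Second, you claim that the separation $\Omega(\psi_{X,I})\cap B(\partial D,\delta) = \emptyset$ ``places $\Omega_2$ outside a $\delta$-neighborhood of $D$.'' That separation only concerns $\partial D$. For points of $\Omega_2$ within $\delta$ of $\mathring{D}$ on the upstream side, avoidance follows from $O_x\cap D = \emptyset$ and transversality; but on the downstream side one must argue separately that such points cannot be non-wandering for $\psi_{X,I}$ (using $\text{dist}_H(D,\hat D) > 0$ and the fact that impulsive trajectories jump to $\hat D$ rather than passing through $D$). This argument is not present in your proposal, and without it the inclusion $\Omega_2\cap B(D,\delta)=\emptyset$ is not justified.

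Third, the pull-back step is too quick: Theorem~\ref{thmA} gives periodic points $y_n \to y \in \mathring D$, but these $y_n$ do not lie in $D$, and $\varphi_{-\tau_1(x)}(y_n)$ is a point on the periodic orbit of $y_n$ \emph{only if} the time elapsed since the previous impulse along that orbit is at least $\tau_1(x)$. This requires an estimate relating $\tau_1(x)$ to the flight times from $\hat D$ to points near $y$ along periodic orbits accumulating on $y$, and in general the pull-back may land off the periodic orbit (in the image $\varphi_{(0,\tau_1(x))}(\hat D)$ rather than on $\gamma_n$). Finally, as you yourself flag, the upper-semicontinuity argument producing the neighborhood $\mathcal V$ with the uniform separation from $\partial D$ is left entirely unproved, and for item~(2) the issue of orbits hitting $\partial D$ (which Theorem~\ref{thmA} does not cover) is not addressed either. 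These are the points that a complete proof in the style of \cite{STV1} has to resolve.
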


\subsection{$C^0$-Baire Generic flows}
\label{subsec:C0flows}

Our next results concerns the 
set of periodic orbits
of impulsive semiflows obtained from a given impulsive map and typical continuous flow.

\begin{maintheorem}\label{thmA-flows}
Let  $D, \hat D \subset M$ be smooth  codimension one submanifolds and 
$I\in \mathscr I_{D, \hat D}$ be an impulsive map.
 There exists a $C^0$-Baire generic subset
$\mathscr{R}_I\subset \mathfrak V_{D,\hat D}$ such that
      $$
    \overline{\text{Per}(\psi_{\varphi,I})}=\Omega(\psi_{\varphi,I}), \quad \text{for every $\varphi \in \mathscr{R}_I$.}
 $$
\end{maintheorem}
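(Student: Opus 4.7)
The plan is to mimic Pugh's general density argument, rephrased in the $C^0$-homeomorphism (Akin--Hurley--Pilyugin--Plaut) style, and to feed it with the two technical tools that the introduction announces will be proved in Section~\ref{sec:prelimi}: (i) a $C^0$-closing lemma for $\psi_{X,I}$ obtained by perturbing the vector field, and (ii) the fact that for a $C^0$-Baire generic $X\in\mathfrak V_{D,\hat D}$ every periodic orbit of $\psi_{X,I}$ is \emph{permanent}, i.e.\ it persists (and moves continuously) under sufficiently small $C^0$-perturbations of $X$. Endow the space $\mathcal{K}(M)$ of compact subsets of $M$ with the Hausdorff metric and consider the two maps $X\mapsto \Omega(\psi_{X,I})$ and $X\mapsto \overline{\mathrm{Per}(\psi_{X,I})}$. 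Since $\mathrm{Per}\subset \Omega$ always holds, the goal reduces to proving that, generically, every non-wandering point can be accumulated by periodic orbits.

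I would first establish the lower semicontinuity of $X\mapsto \overline{\mathrm{Per}(\psi_{X,I})}$ on the generic set of vector fields having only permanent periodic orbits. Concretely, if $\gamma$ is a permanent periodic orbit of $\psi_{X,I}$ and $U$ is an open set meeting $\gamma$, then for every $Y$ sufficiently $C^0$-close to $X$ the semiflow $\psi_{Y,I}$ still has a periodic orbit intersecting $U$; this uses continuous dependence of the Poincaré maps of $\psi_{X,I}$ on $X$ (valid away from $\partial D$ by hypothesis \eqref{eq:noncompact} together with the transversality in \eqref{def:spaceofVectorFields}), and the permanence hypothesis closes the argument.

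Next, for density I invoke the $C^0$-closing lemma for vector fields in the impulsive setting: given $X\in\mathfrak V_{D,\hat D}$, $x\in\Omega(\psi_{X,I})$ and a neighborhood $U$ of $x$, one produces $Y\in\mathfrak V_{D,\hat D}$ arbitrarily $C^0$-close to $X$ such that $\psi_{Y,I}$ has a periodic orbit through $U$. The Baire-category wrap-up is then standard: fix a countable base $\{U_n\}$ of $M$ and set
\[
\mathcal A_n=\{X\in\mathfrak V_{D,\hat D}\colon U_n\cap\Omega(\psi_{X,I})=\emptyset\}\ \cup\ \{X\in\mathfrak V_{D,\hat D}\colon U_n\cap\overline{\mathrm{Per}(\psi_{X,I})}\neq\emptyset\}.
\]
Upper semicontinuity of $X\mapsto\Omega(\psi_{X,I})$ (at least at the permanence-generic $X$) shows that the first alternative defines an open set, while the closing lemma together with the permanence statement shows that the second alternative is dense; hence each $\mathcal A_n$ is open and dense. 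The intersection of the $\mathcal A_n$ with the generic set from the permanence statement gives the desired $\mathscr R_I$: for $X\in\mathscr R_I$, any non-wandering point of $\psi_{X,I}$ must lie in the closure of the periodic set, yielding the equality in the theorem.

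The main obstacle is the closing step. Unlike in the purely continuous or smooth setting, a $C^0$-perturbation of $X$ propagates through the impulsive region: shifting a recurrent trajectory close to itself by perturbing $X$ affects the hitting times $\tau_n$ and thus the location of the impulses $I(\varphi_{\tau_n}\cdot)$, which in turn re-enters the flow portion of the orbit. One must therefore design a perturbation that controls the accumulated effect of finitely many flow segments and impulses, exploiting the transversality to $D,\hat D$, the Lipschitz regularity of $\varphi$ and $\tau_1$, and the fact that $I$ is a homeomorphism onto $\hat D$ disjoint from $D$. The remaining subtlety is the behavior near $\partial D$: the semiflow may be genuinely discontinuous on pre-images of $\partial D$, and one handles this by either discarding that (lower-dimensional, nowhere-dense) bad set from $U$ or by using the $\mathring D$-type reasoning of Theorem~\ref{thmA}, which in the present vector-field-perturbation setting is facilitated by the fact that $C^0$-perturbations of $X$ can move orbits away from $\partial D$.
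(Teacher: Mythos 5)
Your identification of the two key ingredients is right: the $C^0$-closing lemma for vector-field perturbations of impulsive semiflows (Lemma~\ref{cl2vectors}) and the $C^0$-genericity of permanence of periodic orbits (Proposition~\ref{perm1}), together with lower semicontinuity of $X\mapsto\overline{\mathrm{Per}(\psi_{X,I})}$ on the permanence-generic set, are exactly what the paper uses. The gap is in your Baire wrap-up. You invoke upper semicontinuity of $X\mapsto\Omega(\psi_{X,I})$ to argue that $\{X:U_n\cap\Omega(\psi_{X,I})=\emptyset\}$ is open; this is not established anywhere in the paper and is dubious in the impulsive setting, where (as the abstract emphasizes) the non-wandering set need not even be invariant, so there is no reason to expect it to vary upper semicontinuously with the vector field. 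Even setting that aside, the inference ``first alternative open and second alternative dense, hence $\mathcal A_n$ open and dense'' is not valid as stated: a union of an open set and a dense set need not be open, and the set $\{X:U_n\cap\overline{\mathrm{Per}(\psi_{X,I})}\neq\emptyset\}$ is itself not open in general (only where periodic orbits are permanent).

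The paper avoids both problems by a slightly different organization of the same ideas. It works with the map $\Gamma(X)=\overline{\mathrm{Per}(\psi_{X,I})}$, which is lower semicontinuous on the residual set $\mathscr R_I^0$ of vector fields with only permanent periodic orbits (Proposition~\ref{perm1}); by the Kuratowski--Fort theorem \cite{Ku}, the continuity points of $\Gamma|_{\mathscr R_I^0}$ form a residual subset $\mathscr R_I\subset\mathscr R_I^0$. If some $X\in\mathscr R_I$ had $p\in\Omega(\psi_{X,I})\setminus\overline{\mathrm{Per}(\psi_{X,I})}$, then chaining Lemma~\ref{cl2vectors} (to create a periodic orbit near $p$) with Lemma~\ref{pert1} (to make it permanent), and using the density of $\mathscr R_I$, one produces $Y\in\mathscr R_I$ arbitrarily $C^0$-close to $X$ whose periodic set enters a fixed neighborhood of $p$ that $\Gamma(X)$ avoids---contradicting continuity of $\Gamma|_{\mathscr R_I^0}$ at $X$. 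This contradiction argument entirely replaces the ``first alternative is open'' step, so no control on the non-wandering set is needed beyond the trivial inclusion $\overline{\mathrm{Per}}\subset\Omega$. Your scheme can be repaired by replacing $\{X:U_n\cap\Omega=\emptyset\}$ with its interior (trivially open) and the second alternative with $\{X:\psi_{X,I}\text{ has a \emph{permanent} periodic orbit meeting }U_n\}$, then proving the union is dense; but as written the upper-semicontinuity claim and the openness of $\mathcal A_n$ are unjustified.
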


Some comments are in order. First, it should be noticed that the impulsive semiflows are at most as regular as the impulsive map, hence rarely Lipschitz continuous. Second, the statement of Theorem~\ref{thmA-flows} is substantially stronger than the conclusion of Theorem~\ref{thmA}, and this is due to the fact that perturbing the vector field one can obtain denseness of periodic orbits even among points in the non-wandering set that do not visit the impulsive region. 

\bigskip

\color{black}
\section{Technical results}\label{sec:prelimi}

This section is devoted to develop the main technical tools to be used in the proofs of the main results. We prove that the spaces of flows are Baire spaces, introduce Poincaré maps and prove several perturbation lemmas - both on flows and impulses - leading to the appearance of permanent periodic orbits.

\subsection{Baire spaces}\label{subsec:Poincare}

In what follows we establish that the spaces of continuous flows considered to parametrize the impulsive flows are Baire spaces.

\begin{proposition}\label{Baire}
The spaces $\mathscr{F}^0(M)$ 
and $\mathfrak V_{D,\hat D}$  are  Baire spaces with respect to $\tau$.
\end{proposition}
\begin{proof}
We begin by 
noting
that the topology $\tau$ is metrizable through the distance
\begin{equation}\label{d2}
\mathrm{d}_1(\varphi_1,\varphi_2)=\max_{t\, \in [a,b], \,x\, \in M}d(\varphi_1(t, x),\varphi_2(t, x)).
\end{equation}
Note that $\Gamma(\mathscr{F}^0(M))\subset C^0( [a,b] \times M,M)$ and $C^0([a,b] \times M,M)$ is a complete metric space (cf. \cite[pp. 62]{Hi}). 
\begin{claim}\label{claim}
 $\Gamma(\mathscr{F}^0(M))$ is closed.
 \end{claim}

In this case $\Gamma(\mathscr{F}^0(M))$ is complete because a closed subset in a complete space is also complete. To see that $\mathscr{F}^0(M)$ is a complete space we take a Cauchy sequence $\{\varphi_n\}_n\in \mathscr{F}^0(M)$. It follows directly from the definition of the metrics $\mathrm{d}_1$ in \eqref{d2} and $\mathrm{d}_2$ in \eqref{d1} that $\Gamma(\{\varphi_n\}_n)$ is also Cauchy in $C^0( [a,b]\times M,M)$. Since $C^0( [a,b]\times M,M)$ is complete $\Gamma(\{\varphi_n\}_n)$ converges to $\hat\varphi\in C^0( [a,b]\times M,M)$. By Claim~\ref{claim} we obtain that $\hat\varphi\in \Gamma(\mathscr{F}^0(M))$ and so exists $\varphi\in \mathscr{F}^0(M)$ such that $\Gamma(\varphi)=\hat\varphi$. From $\lim_{n\to\infty} \mathrm{d}_2(\Gamma(\varphi_n),\hat\varphi)=0$ we obtain $\lim_{n\to\infty} \mathrm{d}_1(\varphi_n,\varphi)=0$.

We are left to prove Claim~\ref{claim}. 
Take any $\{\varphi_n\}\in \mathcal{F}^0(M)$ converging, in the metric $\mathrm{d}_1$, to $\varphi\in C^0( [a,b]\times M,M)$. We must prove that $\varphi\in \mathcal{F}^0(M)$. We have
\begin{equation}\label{dd6}
0=\lim_{n\to\infty}\mathrm{d}_1(\varphi,\varphi_n)=\lim_{n\to\infty}\max_{t\,\in\,[a,b], \,x\,\in \,M}d(\varphi(t,x),\varphi_n(t,x)).
\end{equation}
The equality $\varphi(0,x)=x$ for all $x\in M$ follows directly from \eqref{dd6} taking $t=0$ and noting that $\varphi_n(0,x)$ is constant since $\varphi_n$ is a flow. Now we prove the equality $\varphi(t+s, x)=\varphi(t,\varphi(s,x))$ for all $x\in M$ and all $t,s\in\mathbb{R}$. Fixing any $x$, it is sufficient to show that the group property holds for $t,s$ satisfying that:
\begin{equation}\label{ts}
 t\in[a,b)\text{ and} \ s \text{ is such that}\,\,\, t+s\in(a,b],  
 \end{equation}
 since then we can shift $x$ along its $\varphi_t$-orbit. Take $\varepsilon>0$ and choose $n_1$ such that, for all $n\geq n_1$ we have
\begin{equation}\label{dd6768}
\mathrm{d}_1(\varphi,\varphi_n)<\frac{\varepsilon}{3}.
\end{equation}
Since $\varphi$ and $\varphi_n$ are continuous, 
$\varphi(0,x)=x$ and $\varphi_n(0,x)=x$ for all $x\in M$ take $n_2>n_1$ such that 
\begin{equation}\label{dd6768tr}
d(\varphi_n(t, \varphi_n(s, x)),\varphi_n(t, \varphi(s, x)))<\frac{\varepsilon}{3}
\end{equation}
holds for all $n\geq n_2$ and $t,s$ as in \eqref{ts}.
Since $\varphi_n$ is a flow we get:
\begin{eqnarray*}
d(\varphi(t+s, x),\varphi(t, \varphi(s,x)))&\leq &d(\varphi(t+s, x),\varphi_n(t+s, x))\\
&+&d(\varphi_n(t, \varphi_n(s, x)),\varphi_n(t, \varphi(s, x)))\\
&+&d(\varphi_n(t,\varphi(s, x)),\varphi(t, \varphi(s,x)))\\
&\overset{\eqref{dd6768}+\eqref{dd6768tr}+\eqref{dd6768}}{\leq}&\frac{\varepsilon}{3}+\frac{\varepsilon}{3}+\frac{\varepsilon}{3}=\varepsilon.
\end{eqnarray*}
This proves that $\mathscr{F}^0(M)$ is a Baire space.
Since $\mathfrak V_{D,\hat D} $ is a closed subspace of
 $\mathscr{F}^0(M)$, it is also a Baire space.

\end{proof}


\subsection{Poincar\'e maps for impulsive semiflows}\label{subsec:Poincare}

Given $\varphi \in \mathscr{F}^{0,1}(M)$  generated by a vector field $X \in \mathfrak{X}^{0,1}(M)$, a periodic orbit $\gamma$ and a local cross-section $\Sigma$ passing through a point $p \in \gamma$, the Poincar\'e map
$P: U\subset \Sigma \to \Sigma$ is a bi-Lipschitz local homeomorphism defined on some open neighborhood $U\subset \Sigma$ of $p$ which is 
the first return map of points in $ U$ to $\Sigma$.  

In the case of impulsive semiflows, by some abuse of notation we shall denote as Poincar\'e map the ones that concatenate the impulse and the first hitting time map to $D$. More precisely, given $I\in \mathscr I_{D,\hat D}$ we shall consider the map (which we denote as a Poincar\'e map)
\begin{equation}\label{defPo00}
\begin{array}{cccc}
P_{\varphi,I} : & \widehat{\hat D} & \to & \hat D \\
	& x & \mapsto & I \circ \varphi_{\tau(x)}(x)
\end{array}
\end{equation}
where 
\begin{equation}\label{deftaup}
\tau(x)=\inf\{t>0 \colon \varphi_t(x)\in D\} 
\end{equation} 
for every $x\in \hat D$ and 
$\widehat{\hat D}=\{ x\in \hat D \colon \tau(x) <\infty\}.$
Notice that $\tau=\tau_1\mid_{\hat D}$.
Consider the  set 
$$
\widehat{I_*(D)}=\{x\in \widehat{\hat D} \colon \varphi_{\tau(x)}(x) \notin \partial D \}
$$
which consists of the points in $\widehat{\hat D}$ which are mapped in the interior of $D$.
It will be also convenient to consider the map
$f_{\varphi,I}: D \to D$ given by
\begin{equation}
    \label{eq:deffI}
    f_{\varphi,I}= I^{-1} \circ P_{\varphi,I} \circ I,
\end{equation}
defined on the impulsive region $D$, which is the support of all perturbations of the 
impulsive maps.

\medskip

 Let us recall the Lipschitz flowbox theorem.

\begin{lemma}\label{thm:BC} \cite[Theorem~4]{BC}
Let  $X\in \mathfrak{X}^{0,1}(M)$ be a Lipschitz continuous vector field,
 $\varphi$  the flow generated by $X$ and  $T$ the trivial flow on $\mathbb R\times \mathbb R^{d-1}$ defined by $T_t(s,z)=(s+t,z)$. 
 If $p$ is a regular point then there exist $\delta>0$, a neighborhood $U_\delta$ of $p$ and a lipeomorphism 
$h: U_\delta \to [0,\delta] \times {B(\vec0,\delta)}$ 
so that $h \circ \varphi_t (x) = T_t \circ h(x)$ for every $x\in U_\delta$ and $t\in \mathbb R$ so that $\varphi_t(x)\in \overline{U_\delta}$.
\end{lemma}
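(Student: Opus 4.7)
The plan is to construct the flowbox chart explicitly by flowing a transverse cross section, and then verify the Lipschitz regularity of the chart and its inverse by hand (without appealing to a $C^1$ inverse function theorem).

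First I would exploit that $p$ is regular: $X(p)\neq \vec 0$, and by continuity of $X$ there is a neighborhood $V$ of $p$ on which
$$\bigl\langle X(y),\, X(p)/\|X(p)\|\bigr\rangle \;\geq\; c \;>\; 0.$$
Fix a hyperplane $\Sigma\subset T_pM$ orthogonal to $X(p)$ together with a bi-Lipschitz parametrization $q: B(\vec 0,\delta)\to\Sigma\cap V$ with $q(\vec 0)=p$, and define the candidate chart
$$
h^{-1}: [0,\delta]\times B(\vec 0,\delta)\longrightarrow M,
\qquad h^{-1}(s,z)=\varphi_s(q(z)).
$$
Shrinking $\delta$, the image stays inside $V$. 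Applying Gr\"onwall's inequality to $\dot y = X(y)$ with $X$ Lipschitz gives that $\varphi_s$ is bi-Lipschitz in initial conditions, uniformly for $s$ in a bounded interval. Combined with the trivial Lipschitz estimate in $s$ (with constant $\|X\|_{L^\infty(V)}$), this shows $h^{-1}$ is Lipschitz; since $q$ is bi-Lipschitz, the relation $h \circ \varphi_t(x) = T^t \circ h(x)$ required in the statement holds by construction on the set of $x\in U_\delta$ whose trajectory stays in $\overline{U_\delta}$.

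Next I would prove that $h^{-1}$ is injective and has a Lipschitz inverse on a sufficiently small domain. Injectivity uses both uniqueness of ODE solutions (two different points of $\Sigma$ produce disjoint short trajectories) and the lower bound $c>0$ on $\langle X, X(p)/\|X(p)\|\rangle$, which makes $s\mapsto\varphi_s(q(z))$ strictly monotone in the direction of $X(p)$. For the Lipschitz regularity of $h$, given $y$ close to $p$, let $s(y)$ be the (unique) time at which the backward trajectory of $y$ meets $\Sigma$, and set $z(y)=q^{-1}\bigl(\varphi_{-s(y)}(y)\bigr)$. To bound $|s(y_1)-s(y_2)|\lesssim \|y_1-y_2\|$, I would estimate the component of $\varphi_{-s(y_i)}(y_i)-p$ along $X(p)$: this component vanishes (the point lies on $\Sigma$), so the difference of hitting times is controlled by the distance $\|y_1-y_2\|$ divided by the uniform transverse speed $c$. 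The Lipschitz regularity of $z(y)$ then follows from $z(y)=q^{-1}\circ\varphi_{-s(y)}(y)$ and the Lipschitz dependence of $\varphi$ on both time and initial data.

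The main obstacle is the last step: verifying Lipschitz regularity of the hitting-time function $s(\cdot)$ to the transverse section for a merely Lipschitz vector field, where the classical $C^1$ implicit function theorem is unavailable. The argument sketched above replaces it with the quantitative transversality estimate $\langle X,X(p)\rangle\geq c>0$, which plays the role of the usual non-degeneracy of the derivative and suffices for a direct Lipschitz estimate on the hitting time. Once this is in hand, the composition $h=(s(\cdot),\,q^{-1}\circ \varphi_{-s(\cdot)}(\cdot))$ is Lipschitz, and the conjugation identity $h\circ\varphi_t = T^t\circ h$ is immediate from the definition of $h^{-1}$.
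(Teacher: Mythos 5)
The paper does not prove this lemma; it is quoted verbatim from Boldt--Calcaterra \cite[Theorem~4]{BC}, so there is no in-paper argument to compare against. That said, your proposal is a correct and essentially standard proof of the Lipschitz flowbox theorem, and it matches the strategy of the cited reference: build the chart by flowing a transversal section, get Lipschitz regularity of $h^{-1}$ from Gr\"onwall plus the sup-norm bound on $X$, and get Lipschitz regularity of the hitting-time function (hence of $h$) from the uniform lower bound $\langle X(y),X(p)/\|X(p)\|\rangle\ge c>0$, which is exactly the quantitative substitute for the non-degenerate derivative that a $C^1$ implicit function argument would use. The key estimate you sketch can be made fully rigorous: writing $F(s,y)=\langle\varphi_{-s}(y)-p,\,X(p)/\|X(p)\|\rangle$, one has $\partial_s F\le -c$ and $|F(s,y_1)-F(s,y_2)|\le L\|y_1-y_2\|$ with $L$ a uniform Lipschitz constant for $\varphi_{-s}$ on $[0,\delta]$, so from $F(s(y_i),y_i)=0$ one gets $|s(y_1)-s(y_2)|\le (L/c)\|y_1-y_2\|$, and the strict monotonicity of $F$ in $s$ simultaneously yields uniqueness of the hitting time and injectivity of $h^{-1}$.

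Two small points worth tightening if you write this up in full. First, ``a hyperplane $\Sigma\subset T_pM$'' should be a codimension-one Lipschitz (e.g.\ geodesic) disk in $M$ through $p$, transverse to $X(p)$; the tangent hyperplane only serves to define it. Second, with $q(\vec 0)=p$ your chart places $p$ on the face $\{s=0\}$ of the box $[0,\delta]\times B(\vec 0,\delta)$; to literally obtain a \emph{neighborhood} of $p$ one should either base the transversal at $\varphi_{-\delta/2}(p)$ or use a symmetric domain $[-\delta,\delta]\times B(\vec 0,\delta)$, which is harmless. Neither remark affects the validity of the argument.
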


In what follows we prove that, even though the Poincar\'e map in \eqref{defPo00} is defined by expressions which take into account the impulse  it is a local homeomorphism.

\begin{proposition}\label{prop:smoothnessP}
Let $\varphi\in\mathscr{F}^{0,1}(M)$ and smooth submanifolds $D,\hat D$ of codimension one transversal to the flow direction  so that 
$\overline{\hat {D}} \cap \overline{D}=\emptyset$. 
If \eqref{eq:noncompact} holds, 
$I \in \mathscr I_{D,\hat D}$ and $x \in \widehat{I_*(D)}$
then there exist a local cross-section $\hat{\Sigma}\subset \hat D$ and  a neighborhood  $\hat{V}\subset \hat{\Sigma}$ of $x$  such that the Poincar\'e map 
$P_{\varphi,I}|_{\hat{V}} $ 
is an homeomorphism onto its image. 
\end{proposition}


\begin{proof}
By the assumption $\overline{\hat {D}} \cap \overline{D}=\emptyset$,
there is an open neighborhood $W$ of $\hat{D}$ such that  $\psi_{{\varphi},I} |_{W} = \varphi$. 
Let $x \in \widehat{I_*(D)}$ and consider a local cross-section  $\hat{\Sigma}\subset \hat D$. 
The Lipschitz flowbox theorem for $\varphi$ 
(cf. Lemma~\ref{thm:BC})
guarantees the existence of  a flow-box $(V_1, h_1)$ at $x$  where $V_1 \subset W $ is an open neighborhood of $x$,  
and the chart map $h_1: V_1\to [0,1]\times U_1 \subset[0,1]^d$ sends trajectories of $X$ in $V_1$  into trajectories of the vector field 
$X_{V_1}(z):= (1,0, \cdots, 0)$ for every $z\in [0,1]^d$.

Choose $t_0>0$ small such that $\varphi_{t_0}(x)$ still lies in $V_1$  
and consider the compact segment of orbit  $\Gamma:= \{ \varphi_t(x):  t_0 \le  t \le  \tau(x)\}$. Since   $\tau(x)< \infty$, 
there exists a long tubular {Lipschitz} flow-box chart $(V_2, h_2)$ for the original flow $\varphi$, say
$h_2: V_2\to [0,1]\times U_2 \subset[0,1]^d$
 such that $V_2 \supset \Gamma$. 
Let $\Sigma_1=h_2^{-1}(\{0\} \times U_2)$ and $\Sigma_2=h_2^{-1}(\{1\} \times U_2)$ be the components of the boundary of $V_2$ which are transversal to the flow $\varphi$.  Denote by $\pi_1: \hat{V} \subset \hat{\Sigma}\to \Sigma_1$,  $\pi_2: \Sigma_1 \to  \Sigma_2$,  the projections along the positive trajectories of $\varphi$ given by the tubular flowbox theorems, where $\hat{V}$ is a small open neighborhood of $x$ in $\hat{\Sigma}\cap \widehat{I_*(D)}$. Similarly, let  $\pi_3: \Sigma_2 \to  \Sigma$ be the projection along the negative trajectories of $\varphi$, where $\Sigma$ is a small neighborhood of $\varphi_{\tau(x)}(x)$ in $D$. 
Each $\pi_i$, $i=1, 2, 3$ is a Lipschitz continuous map. 
Since
$$
P_{\varphi,I}|_{\hat{V}}\equiv I\circ \pi_3\circ \pi_2\circ\pi_1
$$ 
and the 
impulsive map $I$ is an homeomorphism we have that 
$P_{\varphi,I}|_{\hat{V}}$ 
is continuous. 
Using the invertibility of the impulsive map $I$ and the flow $\varphi$, the same argument can be done for construct the inverse of the Poincar\'e map as before, proving that it is a local homeomorphism. 
\end{proof}


\subsection{Perturbations lemmas}\label{closing&homo}

\subsubsection*{$C^0$-perturbations of impulses }
We consider first the case where the flow in $\mathscr{F}^{0,1}(M)$ is fixed and the $C^0$-perturbations are considered in the space of 
impulsive maps. 

\medskip
We will say that a periodic orbit $\gamma$ for 
an
 impulsive semiflow 
$\psi_{{\varphi},I}$ is an \emph{attracting periodic orbit} if there exists a local cross-section $\Sigma$ and $p\in \Sigma\cap \gamma$ that is an attracting fixed point for the Poincar\'e map 
$P_{\varphi,I}$.
\bigskip

\begin{lemma}\label{cl2} ($C^0$-closing lemma: impulses) Let  $D$, $\hat D$ be smooth submanifolds of codimension one and let $\varphi \in \mathscr F^{0,1}(M)$. Assume that $I\in \mathscr I_{D, \hat D}$ is an impulsive map and 
 $p\in \Omega(\psi_{\varphi,I})$  is a regular point so that 
 $\overline{\gamma_p} \cap D\neq \emptyset$ and  $\gamma_p\cap \partial \hat D=\emptyset$. 
For every $\varepsilon>0 $ there exists 
$J\in \mathscr I_{D, \hat D}$ which is $\varepsilon$-$C^0$-close to $I$
 so that $\psi_{\varphi,J}$ has an attracting periodic orbit 
$x$ such that $\text{dist}(p,x)<\varepsilon$.

\end{lemma}

\begin{proof}
We may assume that $p\in \Omega(\psi_{{\varphi},I})$ is not periodic for $\psi_{{\varphi},I}$.
If $p\in \hat D$ is a periodic point for $\psi_{\varphi,I}$ and 
$P_{\varphi,I}^n(p)=p$ 
then there exists an arbitrary $C^0$-small perturbation $J$ of $I$ in such a way that $p$ is an attracting periodic orbit for $\psi_{\varphi,J}$ (see e.g. \cite[Proposition 4.3]{AABT}).
Otherwise, up to consider a point on the orbit of $p$, we may assume without loss of generality that  $p\in \Omega(\psi_{{\varphi},I}) \cap \hat D$  is non-periodic. 
Observe that $p$ is a non-wandering point for the Poincar\'e map
 $P_{\varphi,I}$ 
given by ~\eqref{defPo00}.
In particular, $q=I^{-1}(p)$ is a non-wandering point for $f_I: D \to D$ expressed by ~\eqref{eq:deffI}.

Given $\vep>0$, there exist $x\in \hat D$ and a least integer $n\ge 1$ such that $d(x,p)<\vep$ and
$d(x_n,p)<\vep$, where 
$x_i=P_{\varphi,I}^i(x)$.
Noting that $\hat D$ is of dimension $d-1\ge 2$, that $\{ {x_i} : 1\le i \le n\}$ is a finite collection
of pairwise disjoint points and $d(x_n,p)<\vep$, there exists an homeomorphism $\zeta: \hat D \to \hat D$ homotopic to the identity 
such that $\zeta(x_n)=x$, $\zeta(x_i)=x_i$ for every $1\le i \le n-1$ and $d_{C^0}(\zeta,\text{id})<\vep$.
The impulsive map $J=\zeta \circ I$ is $\vep$-$C^0$-close to $I$ and $x\in \text{Per}(\psi_{\varphi},J)$. 
This finishes the proof of the lemma.
\end{proof}

\smallskip

\subsubsection*{$C^0$-perturbations of flows}
In the notation of the flowbox theorem (recall Lemma~\ref{thm:BC}), given $0<r<\delta$ we will consider also the flowbox `cylinders' 
$\mathcal{F}_{X,\delta}(B):={h^{-1}} \ ( [0,\delta] \times B)$, for $B\subset B(\vec0,\delta)$, and the local 
cross-section $\Sigma_p:={h^{-1}} \ (\{0\} \times B(\vec0,\delta))$ to the flow $\varphi$ at $p$. 
Since the points in the impulsive region $D$ do not belong to the orbits of the impulsive semiflow, the latter provides a local trivializing chart for regular points of an impulsive semiflow whose orbit closure does not intersect $\partial D$.  
The following perturbation lemma will be instrumental.

\begin{lemma} \cite[Lemma~5.3]{BTV}\label{cl}
Let $X\in \mathfrak{X}^{0,1}(M)$, let $p$ be a regular point, $\Sigma_p$ be a local cross-section through $p$ 
and let $h$ be the lipeomorphism given by Lemma~\ref{thm:BC}. 
Assume there exists a $C^1$-arc $\gamma\colon [0,\delta]\rightarrow \Sigma_p$ with constant speed $\nu$ 
(hence of length $\nu \delta$) such that $\gamma(0)=p$ and $\gamma(\delta)=q$. 
Then there exists $C>0$ so that for any $\eta>\nu\delta$, any open tubular neighborhood  $B\subset \Sigma_p$ 
of \text{diam}eter smaller than $\eta$ containing $\gamma$ 
and any $\vep>0 $ there exists a vector field $Y$  such that:
(a) $Y\in \mathfrak{X}^{0,1}(M)$;
(b) $Y$ is $C \eta(1+\vep)$-$C^0$-close to $X$;
(c)  
$\varphi^Y_\delta(p)=\varphi^X_\delta(q)$
and
(d) $Y=X$ outside $\mathcal{F}_{X,\delta}(B)$.
\end{lemma}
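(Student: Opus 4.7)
The plan is to perform the perturbation inside a flow-box chart of $X$ at $p$, where $X$ becomes the trivial vector field, and then pull back to $M$ via $h$. I would begin with the chart $h\colon U_\delta\to [0,\delta]\times B(\vec 0,\delta)$ provided by Lemma~\ref{thm:BC}, conjugating $X$ to $e_1:=(1,0,\ldots,0)$ and identifying $\Sigma_p$ with $\{0\}\times B(\vec 0,\delta)$. Writing $\tilde\gamma\colon[0,\delta]\to B(\vec 0,\delta)$ for the second-coordinate component of $h\circ\gamma$, one has $\tilde\gamma(0)=\vec 0$ and $\tilde\gamma(\delta)=:z_q$; letting $\tilde B\subset B(\vec 0,\delta)$ be the corresponding image of $B$, the arc $\tilde\gamma$ has speed at most $L\nu$ and $\tilde B$ is an open tubular neighborhood of $\tilde\gamma$ of diameter at most $L\eta$, where $L\ge 1$ denotes a bi-Lipschitz constant of $h$.

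Because $\gamma$ is only $C^1$, $\tilde\gamma'$ is merely continuous and would spoil Lipschitz regularity of the perturbed field. To circumvent this, I would first choose a $C^{1,1}$ arc $\tilde\gamma_\vep\colon[0,\delta]\to\tilde B$ with $\tilde\gamma_\vep(0)=\vec 0$, $\tilde\gamma_\vep(\delta)=z_q$, and $\|\tilde\gamma_\vep'\|_{C^0}\le (1+\vep)\|\tilde\gamma'\|_{C^0}$, obtained by convolving $\tilde\gamma$ with a mollifier followed by a small linear endpoint correction. Then I would fix a Lipschitz cutoff $\rho\colon[0,\delta]\times B(\vec 0,\delta)\to[0,1]$ equal to $1$ on the graph $\Gamma_\vep:=\{(s,\tilde\gamma_\vep(s)):s\in[0,\delta]\}$ and vanishing outside $[0,\delta]\times\tilde B$, and set
\[
\tilde Y(s,z):=e_1+\rho(s,z)\,\bigl(0,\tilde\gamma_\vep'(s)\bigr).
\]
Finally I would define $Y:=(h^{-1})_*\tilde Y$ on $U_\delta$ and $Y:=X$ elsewhere, the two definitions gluing continuously since $\rho$ vanishes on $\partial([0,\delta]\times\tilde B)$.

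Verifying the four conclusions then goes as follows. For (c), the integral curve of $\tilde Y$ from $(0,\vec 0)$ stays in the $\rho\equiv 1$ locus $\Gamma_\vep$ and hence solves $\dot s=1$, $\dot z=\tilde\gamma_\vep'(s)$, producing $(t,\tilde\gamma_\vep(t))$ and landing at $(\delta,z_q)=h(X^\delta(q))$ at time $\delta$; unwinding through $h^{-1}$ yields $Y^\delta(p)=X^\delta(q)$. Property (d) follows since $\mathrm{supp}(\tilde Y-e_1)\subset[0,\delta]\times\tilde B$ has $h$-preimage $\mathcal F_{X,\delta}(B)$. For (a), both $\tilde\gamma_\vep'$ and $\rho$ are Lipschitz, so $\tilde Y$ is Lipschitz, and this regularity is preserved by the lipeomorphism $h^{-1}$. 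For (b), $\|\tilde Y-e_1\|_{C^0}\le\|\tilde\gamma_\vep'\|_{C^0}\le(1+\vep)L\nu$, and since the hypothesis $\nu\delta<\eta$ gives $L\nu<L\eta/\delta$, pulling back by $h^{-1}$ (with another factor of $L$) produces $\|Y-X\|_{C^0}\le C\eta(1+\vep)$ with $C$ depending only on $L$ and $\delta$.

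The main obstacle is precisely condition (a): the low $C^1$ regularity of the given arc $\gamma$ forbids the naive choice $\tilde Y=e_1+\rho(0,\tilde\gamma')$, because $\tilde\gamma'$ need not be Lipschitz in the $s$-direction. The $C^{1,1}$-approximation $\tilde\gamma_\vep$ is what salvages Lipschitz control and is the sole source of the factor $(1+\vep)$ in the norm bound. A secondary but important technical point is that the cutoff $\rho$ must be built carefully: the tubularity of $\tilde B$ together with the fact that $\tilde\gamma_\vep$ stays a positive distance from $\partial\tilde B$ allows taking $\rho$ to be a Lipschitz bump in the transverse distance to $\tilde\gamma_\vep$, with Lipschitz constant controlled by that positive distance.
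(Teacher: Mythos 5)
This lemma is not proved in the paper; it is quoted from~\cite{BTV}, so there is no in-text proof to compare against, and I am reviewing your attempt on its own merits. Your overall strategy (work in the Lipschitz flow-box chart where $X$ becomes $e_1$, deform the arc, bump off with a Lipschitz cutoff, push back with $h^{-1}$) is the natural one, but the gluing step has a genuine gap, and your own text is self-contradictory at the crucial point. You require $\rho\equiv 1$ on all of $\Gamma_\vep$, which contains the corner points $(0,\vec 0)$ and $(\delta,z_q)$, while simultaneously asserting that $\rho$ ``vanishes on $\partial([0,\delta]\times\tilde B)$'' --- but those two points lie on the lids $\{0\}\times\tilde B$ and $\{\delta\}\times\tilde B$, which are part of that boundary. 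If $\rho(0,\vec 0)=1$, then $\tilde Y(0,\vec 0)=e_1+(0,\tilde\gamma_\vep'(0))\neq e_1$ (mollifying $\tilde\gamma$ cannot give $\tilde\gamma_\vep'(0)=0$, since $\gamma$ leaves $p$ at speed $\nu>0$), so $Y$ disagrees with $X$ on the lid $\Sigma_p$ of the flow-box and is discontinuous across it; conclusion (a) fails. If instead you force $\rho$ to vanish on the lids, your verification of (c) collapses, because the integral curve from $(0,\vec 0)$ no longer follows $\Gamma_\vep$ near $s=0$.

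The missing ingredient is a cutoff in the flow direction $s$ --- precisely the role played by the function $\alpha_\eta$ in the proof of the paper's Lemma~\ref{pert1}. Replace $\tilde\gamma_\vep$ by an arc $\beta$ with the same endpoints, with $\beta'$ Lipschitz and supported in a compact subinterval of $(0,\delta)$ (for instance $\beta=\tilde\gamma_\vep\circ\phi$ for an increasing reparametrization $\phi$ with $\phi(0)=0$, $\phi(\delta)=\delta$, $\phi'\equiv 0$ near $\{0,\delta\}$), and set $\tilde Y=e_1+\rho(s,z)\,(0,\beta'(s))$. This is $\equiv e_1$ near both lids, glues with $e_1$ there, and its integral curve from $(0,\vec 0)$ is $s\mapsto(s,\beta(s))$, landing at $(\delta,z_q)$ in time $\delta$. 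The cost is an inflation $\sup|\beta'|\ge \sup|\tilde\gamma_\vep'|\cdot\sup\phi'$, and keeping $\sup\phi'\le 1+\vep$ is exactly what the factor $(1+\vep)$ in (b) is there to absorb; your reading of $(1+\vep)$ as coming solely from the $C^{1,1}$ smoothing misses this second, essential, contribution. A secondary caveat: since $h$ is only a lipeomorphism, $\tilde\gamma$ is merely Lipschitz (not $C^1$) and $Dh^{-1}$ is only $L^\infty$, so the assertion that $(h^{-1})_*$ sends the Lipschitz field $\tilde Y$ to a Lipschitz field needs justification (it uses the special product structure $h^{-1}(s,z)=\varphi_s(\sigma(z))$ of the Lipschitz flow-box, not generic bi-Lipschitz pushforward), and should at least be flagged.
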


\color{black}
The previous perturbation lemma is the key ingredient in the proof of the following $C^0$-closing lemma for impulsive semiflows, when perturbing the Lipschitz vector fields.

\begin{lemma}\label{cl2vectors} ($C^0$-closing lemma: vector fields)
Let $\varphi\in \mathcal F^{0,1}(M)$ be generated by $X\in \mathfrak{X}^{0,1}(M)$, $D$, $\hat D$ be smooth submanifolds of codimension one transversal to the flow direction and $I\in \mathscr I_{D, \hat D}$ be an impulsive map. If $p\in \Omega(\psi_{{\varphi},I})$ is a regular point so that $\gamma_p\cap \partial \hat D=\emptyset$, then for every $\varepsilon>0 $ there exists  $Y\in \mathfrak{X}^{0,1}(M)$ which is $\varepsilon$-$C^0$-close to $X$, coincides with $X$ outside
of a small open neighborhood of $p$, and such that 
the impulsive semiflow 
$\psi_{\phi,I}$, for the Lipschitz flow $\phi$ generated by $Y$,
has a periodic orbit 
$\tilde p$ so that $d(p,\tilde p)<\varepsilon$.
\end{lemma}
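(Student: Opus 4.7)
The plan is to adapt the classical Pugh-type closing-lemma argument to the impulsive setting, using Lemma~\ref{cl} as the sole source of localized $C^0$-perturbations of the vector field. First I would locate a near-recurrence of $p$ on a cross-section that avoids the impulsive region and its discontinuities, and then I would use Lemma~\ref{cl} to redirect the perturbed flow inside a small flowbox so that the returning orbit closes up exactly.

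Assume $p \notin \text{Per}(\psi_{X,I})$; otherwise take $\tilde p = p$. Since $p$ is regular and $\gamma_p \cap \partial \hat D = \emptyset$, I would fix a small smooth local cross-section $\Sigma_p \ni p$ transverse to $X$ whose closure is disjoint from $D \cup \partial \hat D$, and use Lemma~\ref{thm:BC} to obtain $\delta > 0$ and a trivializing lipeomorphism on a flowbox $U_\delta$ of $p$. For every ball $B \subset \Sigma_p$ centered at $p$ with small enough radius $\rho$, the flowbox $\mathcal{F}_{X,\delta}(B)$ sits inside $U_\delta$ and avoids $D$, so any $Y \in \mathfrak{X}^{0,1}(M)$ supported in $\mathcal{F}_{X,\delta}(B)$ produces an impulsive semiflow coinciding with $\psi_{X,I}$ outside that flowbox. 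Because $p$ is non-wandering for $\psi_{X,I}$, the first-return map $R$ to $\Sigma_p$ is well defined on an open set and $p$ is non-wandering for $R$; I would then pick a non-periodic $q \in \Sigma_p \cap B(p,\varepsilon/2)$ and $n \geq 1$ such that the iterates $q_0 = q, q_1 = R(q), \dots, q_n = R^n(q)$ are pairwise distinct and $q_n \in B(p,\varepsilon/2)$.

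Next, I would choose $B$ of some radius $\rho < \varepsilon$ containing $q$ and $q_n$ but disjoint from the finite set $\{q_1, \dots, q_{n-1}\}$, join $q_n$ to $q$ by a constant-speed $C^1$-arc $\gamma\colon[0,\delta] \to B$ of length at most $C_0 \rho$, and apply Lemma~\ref{cl} (recentered at $q_n$, whose flowbox is comparable to the one at $p$ when $\rho$ is small) to obtain $Y \in \mathfrak{X}^{0,1}(M)$ with $\|Y-X\|_{C^0} \leq C\rho$, with $Y \equiv X$ outside $\mathcal{F}_{X,\delta}(B)$, and with $Y^\delta(q_n) = X^\delta(q)$. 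Writing $T_n$ for the $n$-th return time of $q$ to $\Sigma_p$ under $\psi_{X,I}$, the orbit segment of $X^\delta(q)$ up to time $T_n - \delta$ crosses $\Sigma_p$ precisely at $q_1, \dots, q_{n-1}$, which lie outside $B$, so it never re-enters $\mathcal{F}_{X,\delta}(B)$. Hence
\[
\psi_{Y,I}^{T_n-\delta}\bigl(X^\delta(q)\bigr) = \psi_{X,I}^{T_n-\delta}\bigl(X^\delta(q)\bigr) = \psi_{X,I}^{T_n}(q) = q_n,
\]
and combining with $Y^\delta(q_n) = X^\delta(q)$ yields $\psi_{Y,I}^{T_n}(q_n) = q_n$, so $\tilde p := q_n$ is periodic for $\psi_{Y,I}$ with $d(p,\tilde p) < \varepsilon$ provided $\rho$ is chosen small enough.

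The main obstacle is the assertion in the previous paragraph that the $\psi_{X,I}$-orbit of $q$ between its first exit from and eventual return to the flowbox never re-enters $\mathcal{F}_{X,\delta}(B)$; any such re-entry would break the identity $\psi_{Y,I} \equiv \psi_{X,I}$ along that segment and destroy the closing. Since the flowbox is tubular over $B$, this reduces to the discrete combinatorial condition $q_1, \dots, q_{n-1} \notin B$, which I would arrange by shrinking $\rho$. The competing demand that $B$ still contain both $q$ and $q_n$ forces a careful order of choices (first the recurrence pair, then the flowbox radius), with the $C^0$-size of the perturbation controlled linearly in $\rho$ by Lemma~\ref{cl}. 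The hypothesis $\gamma_p \cap \partial \hat D = \emptyset$ enters precisely to ensure that $\Sigma_p$ can be chosen away from the impulsive discontinuities so that these constructions are insensitive to them.
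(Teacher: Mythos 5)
Your proposal takes essentially the same route as the paper: fix a local cross-section away from the impulsive data, use the Lipschitz flowbox (Lemma~\ref{thm:BC}) to trivialize near $p$, and then apply the perturbation Lemma~\ref{cl} to close a near-recurrence detected by the non-wandering hypothesis. In fact you are more careful than the paper's own proof on the point where closing arguments usually need the most care, namely arguing that the orbit segment between the departure from and the return to the flowbox never re-enters it, which you reduce to the finite combinatorial condition $q_1,\dots,q_{n-1}\notin B$. Your remark about the ``competing demand'' and the order of choices is a real subtlety; note that the standard way to resolve it is the opposite order from what you parenthetically suggest: first fix the ball $B=B(p,\rho)\cap\Sigma_p$ with $\rho$ small enough that Lemma~\ref{cl} gives an $\varepsilon$-perturbation, then use non-wandering to produce $q\in B$ with $q_n\in B$ for the \emph{first} return index $n$, which automatically puts $q_1,\dots,q_{n-1}$ outside $B$. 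Your order also works in dimension $d-1\ge 2$ after a small adjustment (scale the initial recurrence ball by $1/C$ and take a thin tubular neighborhood of a curved arc), but it is worth being aware that the cleaner order avoids this.

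The one genuine omission is the case $p\in D$. You require $\overline{\Sigma_p}$ to be disjoint from $D$, which is impossible if $p\in D$, and nothing in the hypothesis $\gamma_p\cap\partial\hat D=\emptyset$ excludes this. The paper treats it as a separate short case by choosing a small $t>0$ with $q=X_{-t}(p)\in\Omega(\psi_{X,I})\setminus D$ and applying the main argument at $q$; the resulting periodic orbit near $q$ is still within $\varepsilon$ of $p$ once $t$ is small. You should add this reduction to make the proof cover the full statement.
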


\begin{proof}
In case $p$ is periodic for $\psi_{{\varphi},I}$ we are done.
Hence, we may assume that $p$ is non-periodic.
%
We subdivide the proof in two cases. First, if $p\in \Omega(\psi_{{\varphi},I}) \setminus D$ is a regular point, for any 
$0<r<\delta$ and corresponding flowbow $\mathcal{F}_{X,\delta}(B(\vec0,r)) = h_1^{-1}( [0,\delta] \times B(\vec0,r))$  associated to $p$ there exists $T>0$ such that 
$
\gamma_{T}(\Sigma_p)\cap \Sigma_p \not=\emptyset
$ 
where $\Sigma_p=h_1^{-1}(\{0\} \times B(\vec0,r))$.
Reduce $\delta>0$, if necessary, so that 
$\mathcal{F}_{X,\delta}(B(\vec0,r))\cap D=\emptyset$.
Consequently, just pick $0<r<\delta$ small (it is enough that $4Cr < \vep$) so that the perturbed vector field 
$Y\in \mathfrak{X}^{0,1}(M)$ guaranteed by Lemma~\ref{cl} is $\vep$-$C^0$-close to the vector field $X$ and 
the impulsive semiflow $\psi_{\phi,I}$, for the flow $\phi$ generated by $Y$,
admits a periodic orbit that intersects $\mathcal{F}_{X,\delta}(B(\vec0,r))$.
Second, if $p\in \Omega(\psi_{{\varphi},I}) \cap D$ then there exists a small $t>0$ so that 
$q=X_{-t}(p) \in \Omega(\psi_{{\varphi},I}) \setminus D$, which reduces this case to the previous context.
This finishes the proof of the lemma.
\end{proof}

The next technical lemma will allow to make perturbations of continuous flows and 
give
rise to Lipschitz continuous flows (generated by Lipschitz continuous vector fields) whose orbits belong to the interior of certain topological balls. 
  
 \begin{lemma}\label{mario}
Let $B\subset M$ be an open ball, $p\in \partial B$ and $\varphi \in \mathscr{F}^0(M)$ such that $\varphi_\Pi(p)=p$ for some $\Pi>0$. For any $\vep>0$, there exists $X \in\mathfrak{X}^{0,1}(M)$ such that 
the Lipschitz flow $\phi$ generated by $X$
satisfies:
\begin{itemize}
\item [(i)] ${\rm d_2}(\varphi\big|_{[a,b]\times M},\phi\big|_{[a,b]\times M})<\vep$,
\item [(ii)] $\phi_{\ell}(q)=q$ for some $q\in B$ and $|\ell-\Pi|<\vep$.
\end{itemize}
In particular, $\phi$ is $\vep$-$\tau$ close to $\varphi$. 
\end{lemma}
\begin{proof} 
We begin by considering $\hat X \in\mathfrak{X}^{0,1}(M)$ so that 
$d_2(\varphi\big|_{[a,b]\times M},\hat \phi \big|_{[a,b]\times M})$ is arbitrarily close to 0, where 
$\hat \phi$ is the flow generated by 
the vector field $\hat X$, and $\hat{X}(p)$ is transversal to $T_pB$.
Using local charts, we may assume without loss of generality that a neighborhood of $p$ can be identified with 
 an open subset of $\mathbb{R}^d$. 
Let $\Sigma_p$ be a codimension-one local cross-section at $p$ transversal to $\hat X(p)$ of diameter smaller than $\delta$, let $B^{d-1}(p,\delta)\subset \Sigma_p$ be the codimension-one ball of radius $\delta$ around $p$ in $\Sigma_p$ and denote by $\partial B^{d-1}(p,\delta)$ the boundary of the latter. Note that, reducing $\delta$, we obtain the flowbox
$$\mathcal{F}_{\hat \phi,\delta}\left(B^{d-1}(p,\delta),\left[-\frac{3\Pi}{8},\frac{3\Pi}{8}\right]\right)=\left\{\hat \phi_t (B^{d-1}(p,\delta))\colon -\frac{3\Pi}{8}\leq t\leq\frac{3\Pi}{8} \right\}$$ 
which contains the orbit segment $\{\varphi_{t}(p) \colon 
-\frac{\Pi}{4}\leq t\leq\frac{\Pi}{4} \}$  in its interior. 
By continuity of the flow $\hat \phi$, one can choose a point 
$$
q\in [B\cap B^{d-1}(p,\delta)] \setminus \partial B^{d-1}(p,\delta)
$$
arbitrarily close to $p$, in such a way that: (i) 
$\hat \phi_\ell(q)\in [B\cap B^{d-1}(p,\delta)]\setminus \partial B^{d-1}(p,\delta)$ where $\ell>0$, close to $\Pi$, is the first hitting time of the $\hat \phi$-orbit of $q$ in $\Sigma_p$; and (ii) the 
orbit segment $\{\hat \phi_{t+\ell}(q) \colon -\frac{\Pi}8\le t\le 0\}$ 
is contained in the interior of $\mathcal{F}_{\hat \phi,\delta}(B^{d-1}(p,\delta),[-\frac{\Pi}{8},0])$.

Now, in order to make perturbations in the vector field we use the Lipschitz flowbox theorem \cite{BC} and trivialize the flowbox 
$$\mathcal{F}_\delta:=\mathcal{F}_{\hat \phi,\delta}\left(B^{d-1}(p,\delta),\left[-\frac{\Pi}{2},0\right]\right)$$ as it ensures the existence of a lipeomorphism $h\colon \mathcal{F}_\delta\rightarrow [-\frac{\Pi}{2},0]\times B^{d-1}(\vec0,\delta)$ so that $h\circ \hat \phi_t=T_t\circ h$, whenever the latter is defined, where
$$
\begin{array}{cccc}
T_t \colon \, & [-\frac{\Pi}{2},0]\times B^{d-1}(\vec0,\delta) & \longrightarrow &[-\frac{\Pi}{2},0]\times B^{d-1}(\vec0,\delta) \\
		& (s, x_2,\dots,x_d) & \longmapsto  & (s+t, x_2,\dots,x_d)
\end{array}
$$
 is the flow generated by the vector field $T=(1,0,\dots,0)$. Consider the points $q_1=h(q)$ and $q_2=h(\hat \phi_\ell(q))$ in the cross-section $\{0\} \times B^{d-1}(\vec0,\delta)$, its middle point $z=\frac{q_1+q_2}{2}$ and $r=d(q_1,q_2)<\delta$.  By construction, $B^{d-1}(z,r)\subset B^{d-1}(\vec0,\delta)$. Fix $\rho>1$  such that $\rho r<\delta$.
The support of our perturbation will be the flowbox
\begin{equation}\label{flow}
\mathcal{F}:=
\left\{T_t(B^{d-1}(z,\rho r))\colon -\frac{\Pi}{8}\leq t\leq0 \right\}
\subset h(\mathcal{F}_\delta)
\end{equation}
for the flow $T_t$.
We can suppose without lost of generality that $z=\vec{0}$.
Recall that $q_1$ and $q_2$ are diametrically symmetric.
Hence there exists a two-dimensional disk $Z\subset \{0\} \times B^{d-1}(\vec0,\delta)$ containing $q_1,q_2$  in such a way that $q_1=R_{\pi}(q_2)$, where $R_\alpha$ stands for the rotation of angle $\alpha$ inside the disk $Z$.

We proceed to construct a vector field of the form $P=T+\tilde R$ whose flow $P_t$ is $d_2$-close to $T_t$ and such that 
\begin{equation}\label{final}
P_{\frac{\Pi}{8}}\left(-\frac{\Pi}{8}, \tilde q_2 \right)=(0, \tilde q_1)=q_1,
\end{equation}
where we write $q_i=(0, \tilde q_i) \in\{0\} \times B^{d-1}(\vec0,\delta)$, for $i=1,2$.
Denote by $\langle Z \rangle$ the plane that contains the disk $Z$. Consider the vector field $R$ on $\mathcal F$ defined by
$$
R(s,x_2, \dots, x_d) = \frac{16 \pi}{r \Pi} \| (x_2, \dots, x_d)\| \cdot (0, v_2, v_3, \dots, v_d)\in \{(s,0,0, \dots, 0)\} + \langle Z \rangle
$$
for every $(s,x_2, \dots, x_d)\in \mathcal F$,
where $(0,v_2, v_3, \dots, v_d) \in \langle Z \rangle$ is chosen to be the (unique) unit vector orthogonal to $(0,x_2, \ldots, x_d)$ that induces a positive orientation on $\langle Z \rangle$.
The flow $R_t$ generated by the vector field $R$ consists of rotations (with angles proportional to the distance to the orbit of $z$) in planes parallel to $\langle Z \rangle$. Note that $R_t(s,0, 0, \dots,0)=(s,0, 0, \dots,0)$ for every $t\in \mathbb R$ and, using that $\|\tilde q_1\|=\|\tilde q_2\|=\frac{r}2$, one obtains that $R_{t}(s,\tilde q_1)$ is the point obtained by rotation of angle $\frac{8t\pi}{\Pi}$ centered at $(s,0,0, \dots, 0)$ along the plane $\{(s,0,0, \dots, 0)\} + \langle Z \rangle$. In particular, 
$R_{\frac{\Pi}{8}}(s,\tilde q_1)=(s,\tilde q_2)$ for every $s$.
Take a $C^\infty$ bump-function $\alpha:[0,+\infty) \to [0,1]$ so that $\alpha(x)=1$ when $x\leq r$ and $\alpha(x)=0$ when $x\geq \rho r$, and consider the smooth vector field
$$
\tilde R
(s,x_2,\dots,x_{d})=\left\{
\begin{array}{cl}
      \alpha(\|(x_2,\dots,x_{d})\|)\, R(s,x_2,\dots,x_{d}) & \text{inside the flowbox} \,\,\,\mathcal{F}\\
      0 & \text{in}\,\,\,\mathcal{F}_\delta\setminus \mathcal{F}
\end{array} 
\right..
$$
As $P=T+\tilde R$ coincides with $T$ outside the flowbow $\mathcal{F}$ and the flowbox $\mathcal{F}$ is thin (depending on $\delta$) we get $P_t$ is $d_2$-close to $T_t$.
Moreover, as $[T,\tilde R]=0$ (the Lie brackets being zero is equivalent to say that the generating flows commute) then integrating $P=T+\tilde R$ on $\mathcal F$ one concludes that 
\begin{align*}
P_t(s,x_2,x_3, \dots,x_{d})
    & = R_t T_t (s,x_2,x_3, \dots,x_{d})  
    \\
    & = R_t (t+s,x_2,x_3, \dots,x_{d})  
\end{align*}
for any $t\geq0$ such that $s+t\leq0$.
Thus, 
$$
P_{\frac{\Pi}{8}}\Big(-\frac\Pi8,\tilde q_2\Big)=
R_{\frac{\Pi}{8}} (0,\tilde q_2)
= (0,\tilde q_1) =q_1.
$$
Using the Lipeomorphism $h$ we obtain a flow $\phi$ on $M$ which is 
$d_2$-close to $\varphi$ satisfying $h\circ \phi_t=P_t\circ h$ for all points in $h^{-1}(\mathcal F)\subset M$.
Moreover, using ~\eqref{final} and the fact that $h$  is an homeomorphism one concludes that 
\begin{align*}
h\circ \phi^{\ell}(q)& 
=P_{\ell}\circ h(q)
=P_{\ell}(q_1) 
= P_{\frac\Pi8} (  P_{\ell-\frac\Pi8}(q_1)) \\
& 
= P_{\frac\Pi8} ( {\hat \phi}_{\ell-\frac\Pi8}(q_1)) =
P_{\frac\Pi8} ( {\hat \phi}_{-\frac\Pi8}(q_2))= P_{\frac\Pi8} (-\frac\Pi8, \tilde q_2) = q_1 = h(q) 
\end{align*}
and, consequently, $\phi_{\ell}(q)=q$.
This finishes the proof of the lemma.
\end{proof}
%
%
%

The previous lemma offers a perturbation leading to the existence of periodic orbits for the impulsive semiflow.
In what follows we guarantee that, up to small $\tau$-perturbations, such periodic orbit can be assumed to be attracting.

\begin{lemma}
\label{pert1}
Let $\varphi\in \mathcal F^{0,1}(M)$ be generated by $X\in \mathfrak{X}^{0,1}(M)$, $D$, $\hat D$ be smooth submanifolds of codimension one transversal to the flow direction and $I\in \mathscr I_{D, \hat D}$ be an impulsive map.
Assume that  $p\in  \text{Per}(\psi_{{\varphi},I})$  and $\gamma_p \cap \partial \hat D = \emptyset$. There exists a $\tau$-small perturbation $\phi \in \mathcal  F^{0,1}(M)$ of $\varphi$ such that $p$ is an attracting periodic point for $\psi_{\phi,I}$. 
\end{lemma}

\begin{proof}
Fix $\varphi\in \mathcal F^{0,1}(M)$ generated by $X\in \mathfrak X^{0,1}(M)$
 and $I\in \mathscr I_{D,\hat D}$. 
Assume that $\gamma_p\cap \partial \hat D=\emptyset$ and $\gamma_p\cap \hat D \neq \emptyset$
(the case $\bar{\gamma}_p \cap D = \emptyset$ is simpler and left to the reader).
Write
$\gamma_p\cap \hat D=\{y_0, y_1, \dots, y_{k-1}\}$ and $x_j=I^{-1}(y_j)$ for $0\le j \le k-1$. 

\smallskip
As the orbit $\gamma_p$ does not intersect $\partial \hat D$, $I^{-1}$ is 
uniformly continuous
and the space of $C^1$ vector fields is $C^0$-dense in the space of Lipschitz vector fields, given $\vep>0$, 
there exists $Y^1\in \mathfrak{X}^1(M)$ (generating the flow $\varphi^1$)
that is $C^0$-sufficiently close to $X$ in such a way that the first $k$ intersections of the forward orbit $\gamma^1_p$ of $p$, by the impulsive semiflow $\psi_{Y^1}$, with the cross-section $\hat D$ consists of the points 
$\{\bar y_0, \bar y_1, \dots, \bar y_{k-1}\}$
and their pre-images $\bar x_i=I^{-1}(\bar y_i) \in D$ satisfy 
$f_{\varphi^1,I}^i(\bar x_i)=\bar x_{i+1}$ for every $0\le i <k-1$ and 
$\text{dist}_{D}(f_{\varphi^1,I}(\bar x_{k-1}),\bar x_0) <\vep$.
Hence, one can perform a $\vep$-$C^0$-small perturbation $Y^2\in \mathfrak{X}^1(M)$ of $Y^1$ (generating the flow $\varphi^2$)
supported outside of a neighborhood of $p$
in such a way that $f_{\varphi^2,I}^k(\bar x_0)=\bar x_0$, hence $p\in  \text{Per}(\psi_{\varphi^2,I})$.

\smallskip
We proceed with the proof by choosing an appropriate flowbox for a compact piece of orbit of the flow $\varphi^2$ containing the point $\bar x_{0}$. More precisely, by the flowbox theorem
there exist $\delta>0$, $U_{0}\subset M$ and a trivializing diffeomorphism $h_{0} : U_{0} \to [0,\delta] \times {B(\vec0,\delta)}$ so that $h_{0}(\varphi^{2}_{-\delta}(\bar x_0))=(0,\vec0)$ and $h_{0}(\bar x_0)=(\delta,\vec0)$.
Diminish $\delta>0$ if necessary in such a way that $x_i \notin U_0$ for every $1\le i \le k-1$.

{\color{red}{}}
The strategy will be to make a $C^0$-small perturbation of $Y^2$ to obtain a $C^1$-vector field $Y^3$, generating the flow $\varphi^3$, whose Poincar\'e map is a contraction. In order to do so, let us first construct the perturbation in local flowbox coordinates.

Given $0<\eta \ll \delta<1$ consider the $C^1$ bump-function $\beta_\eta \colon B(\vec0,\delta)\rightarrow [0,\eta]$ 
such that $\beta_\eta\equiv \eta$ inside $B(\vec0,\delta/3)$ and $\beta_\eta\equiv 0$ outside $B(\vec0,\delta/2)$, and consider a $C^1$-function 
$\alpha_\eta: [0,\delta] \to [0,\eta]$
so that $\alpha_\eta(s)=0$ for every $s\in [0,\eta]\cup [\delta-\eta,\delta]$
and 
$\alpha_\eta(s)=\eta$ for every $s\in [2\eta,\delta-2\eta]$.
We define the $C^1$-vector field $Z\in \mathfrak X^1([0,\delta]\times B(\vec 0,\delta))$  by $Z=Z^0+P^\eta$ where $Z^0=(1, 0, \dots, 0)$ and 
$$
P^\eta(s,z)=\big( 0 \,,\,    \alpha_\eta(s) \cdot \beta_\eta(z) \cdot z\big).
$$ 
The vector fields $Z^0$ and $P^\eta$ commute,
$ 
\|Z-Z^0\|_{C^0} \le \eta^2
$  
and
$ 
\|Z_\delta \circ Z^0_{-\delta}(\delta,x)\|\le \eta^2 \|(\delta,x)\|
$
for every $x\in B(\vec 0, \delta/3)$.

\smallskip
Finally, let $Y=Y^3\in \mathfrak X^1(M)$ be the vector field defined by
$$ 
Y(x)= Dh_0(x)^{-1} Z(h_0(x))
$$ 
for each $x\in U_{0}$ and $Y=Y^2$ otherwise, 
and note that $ \|Y^3-Y^2\|_{C^0} \le C \eta^2$, where $C=\|Dh_0(\cdot)^{-1}\|_{C^0}>0$. Moreover, on 
$h_{0}^{-1}([0,\delta] \times {B(\vec0,\delta/3)}) \subset U_{0} $
the flow $\varphi^3$
is the composition of a translation and a strong transversal contraction. Altogether,  
$ 
f_{\varphi^3,I}^k(x_0)=x_0
$
and, as the hitting times $\bar \tau_{\varphi^3}$ and $\bar \tau_{\varphi^2}$ coincide,
\begin{align*}
f^k_{\varphi^3,I} 
    & = \varphi^{3}_{\bar \tau_{\varphi^3}} \circ I \circ f^{k-1}_{\varphi^2,I}    
     =  [ \varphi^3_{\delta} \circ \varphi^2_{-\delta} ] \circ f^k_{\varphi^2,I} 
\end{align*}
on $D\cap \overline{U_0}$.
Choosing $0<\eta \ll 1$ sufficiently small we obtain that $f^k_{\varphi^3,I}$ is a uniform contraction on an open neighborhood of $x_0$.   
By continuous dependence, the flow $\varphi^3$ generated by $Y^3$ is $\tau$-close to $\varphi$, since $Y^3$ is uniformly close to $X$.
\end{proof}


\bigskip

\subsection{Permanent periodic orbits}\label{Fix} 

Given a continuous map $f$ on a compact manifold $M$, 
we will recall the 
fixed point index used in \cite{CMN,H}. Let $B$ be an open ball on $M$ whose boundary $\partial B$ 
is  a embedded sphere and assume that either (i) $f(\overline {B}) \cap \overline {B}=\emptyset$, 
or (ii) $f(\overline B) \cup \overline B$ is contained in a single coordinate chart.
 If, in addition, $f$ has no fixed points in $\partial B$ then the \emph{fixed point index} $\iota_f( B)$ is defined as follows:
\begin{enumerate}
\item $\iota_f( B)=0$, if $f(\overline {B}) \cap \overline {B}=\emptyset$; and
\item $\iota_f( B)= \deg (\gamma)$, in the case that $f(\overline B) \cup \overline B$ is contained in a single 
		coordinate chart, where $\deg(\gamma)$ denotes the Hopf degree of the map 
		$\gamma : \partial B \simeq S^{n-1} \to S^{n-1}$ which is defined (after a change of coordinates) by
		$$
		\gamma(x)= \frac{f(x)-x}{\|f(x)-x\|}.
		$$
\end{enumerate}

We will make use of the following properties of the fixed-point index for continuous maps, in the order we shall use them (cf. \cite{CMN}):
\begin{enumerate}
    \item[(P1)] 
if $f(B)\subset B$, $f\mid_B$ is a topological contraction and $p$ is the unique attracting fixed point in $B$ then 
$\iota_f(B)\neq 0$;
    \item[(P2)]  if $\iota_f(B)$ is well defined then 
    $\iota_f(B)=\iota_g(B)$ 
    for every continuous map $g$ sufficiently $C^0$-close to $f$;
\item[(P3)]  if $\iota_f(B)\neq 0$ then $f$ has a fixed point in $B$.
\end{enumerate}

\medskip
In this framework of impulsive semiflows we will use the fixed-point index for Poincar\'e maps. 
More precisely, given a periodic orbit $\gamma$, a point $p\in \gamma$ and a local cross section $\Sigma$ to the impulsive semiflow passing through $p$,
the Poincar\'e return map is defined in an
open neighborhood $\Sigma_0\subset \Sigma$ of $p$ 
by
\begin{equation}
\label{defpoinc}
    P(x)
=
\begin{cases}
\begin{array}{cl}
   \varphi_{\tau^{+}_\Sigma(x)}(x)  & ,\, \text{if} \, \gamma \cap \hat D=\emptyset  \\
   (\varphi_{\tau^{+}_\Sigma(\cdot)} \circ P_{\varphi,I}^n \circ \varphi_{-\tau^-_{\hat{D}}(\cdot)})(x)  & ,\, \text{otherwise}, 
\end{array}
\end{cases}
\end{equation}
where $\tau^{\pm}_S: S \to \mathbb R^+$ denotes the first hitting time map of the flow $(\varphi_{\pm t})_{t\in \mathbb R}$ to a cross-section $S$ and $n=\#(\gamma \cap \hat D)$. Clearly, $P(p)=p$ if $\gamma\cap \hat D=\emptyset$ and $P^n(p)=p$, otherwise. Note that $P=P_{\varphi,I}\mid_{\Sigma_0}$.

\color{black}

\medskip
We will use the following notions of permanence adapted from topological dynamics. 

\begin{definition}\label{def:permanence}
Fix 
$\varphi \in \mathscr F^{0}(M)$
and $I\in \mathscr I_{D,\hat D}$ and a periodic orbit $\gamma$ for the impulsive semiflow $\psi_{{\varphi},I}$. We say that:

\begin{enumerate}
    \item[(a)] 
    $\gamma$ is \emph{permanent with respect to impulsive maps} (we write 
$\gamma\in \mathscr{P}_{impulsive}(\psi_{{\varphi},I})$)
 if for any impulsive map $J \in \mathscr I_{D,\hat D}$  $C^0$-arbitrarily close to $I$ the impulsive semiflow $\psi_{\varphi,J}$  has a 
periodic orbit near $\gamma$; 
    \item[(b)] 
    $\gamma$ is \emph{permanent with respect to 
flows} (we write simply $\gamma\in \mathscr{P}_{flows}(\psi_{{\varphi},I})
$ if for any flow
$\phi \in \mathscr F^{0}(M)$
that is $C^0$-arbitrarily close to $\varphi$ the impulsive semiflow $\psi_{\phi,I}$  has a periodic orbit near $\gamma$. 
\end{enumerate}
\end{definition}

For notational simplicity, we shall denote by $\mathscr{P}(\psi_{{\varphi},I})$ the set of all permanent closed orbits of $\psi_{{\varphi},I}$ irregardless, 
whenever there is no confusion on the perturbations one considers.

\smallskip
The following two results guarantees that, for typical impulsive semiflows, all periodic points are permanent. The first result concerns impulsive semiflows parameterized by elements in $\mathfrak{V}_{D,\hat D}$ . 
We denote by $\text{Per}_t(\psi_{{\varphi},I})$ the set of periodic points with period smaller or equal than $t$ of $\psi_{{\varphi},I}$.

\begin{proposition}\label{perm1}
Fix $I\in \mathscr I_{D,\hat D}$. There exists a $\tau$-residual subset $\mathscr{R}_I\subset \mathfrak{V}_{D,\hat D}$ such that 
$ 
\text{Per}(\psi_{{\varphi},I})=\mathscr{P}(\psi_{{\varphi},I}) \; \text{for any $\varphi \in \mathscr{R}_I$.}
$ 
\end{proposition}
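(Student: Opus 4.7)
The plan is to exhibit $\mathscr{R}_I$ as a countable intersection of $C^0$-open and dense subsets of $\mathfrak{V}_{D,\hat D}$, using the fixed-point index developed in Subsection~\ref{Fix} as the bridge between attracting periodic orbits (easy to produce via Lemma~\ref{pert1}) and permanent periodic orbits (the desired conclusion). The guiding principle is that whenever the Poincaré map associated to a periodic orbit has non-zero fixed-point index on an isolating ball, the orbit is automatically permanent: by property (P2) the index persists under $C^0$-small perturbations, and by (P3) such a non-trivial index forces the existence of a fixed point of the perturbed Poincaré map, hence a nearby periodic orbit of $\psi_{Y,I}$.

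For each pair $n, k \in \mathbb{N}$, I would consider
\[
\mathcal{U}_{n,k} := \Big\{ X \in \mathfrak{V}_{D,\hat D} : \text{every } p \in \text{Per}_n(\psi_{X,I}) \text{ is } 1/k\text{-permanent under vector-field perturbations} \Big\},
\]
where $1/k$-permanent means that there is a $C^0$-neighborhood $\mathcal{V}$ of $X$ such that every $Y \in \mathcal{V}$ admits a periodic orbit within Hausdorff distance $1/k$ of the orbit of $p$. Openness of $\mathcal{U}_{n,k}$ is immediate from the definition. For density, given $X_0$ and $\varepsilon > 0$, I would first apply a small preliminary perturbation (via the flow-box chart of Lemma~\ref{thm:BC}) so that no periodic orbit of period $\le n$ meets $\partial D \cup \partial \hat D$, and then apply Lemma~\ref{pert1} to turn the selected periodic orbits into attracting ones, endowing them with non-zero index by property (P1) of the fixed-point index.

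The hard part is that the set of periodic orbits of $\psi_{X_0,I}$ of bounded period need not be finite, so a purely iterative application of Lemma~\ref{pert1} orbit-by-orbit cannot be closed off. To circumvent this, I would recast the density of $\mathcal{U}_{n,k}$ through a Baire-category argument on the multifunction
\[
\Phi_n : \mathfrak{V}_{D,\hat D} \to \mathcal{K}(M), \qquad X \mapsto \overline{\text{Per}_n(\psi_{X,I})},
\]
taking values in the hyperspace of compact subsets of $M$ equipped with the Hausdorff metric. Using the continuous dependence of the Lipschitz flow on the vector field together with the continuity of the Poincaré map in Proposition~\ref{prop:smoothnessP}, one checks that $\Phi_n$ is upper semicontinuous on the $C^0$-open dense subset where no periodic orbit of period $\le n$ meets $\partial \hat D$. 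A classical theorem of Kuratowski then produces a residual subset $\mathcal{C}_n$ of $\mathfrak{V}_{D,\hat D}$ on which $\Phi_n$ is actually continuous, so that at every $X \in \mathcal{C}_n$ each $p \in \Phi_n(X)$ is approximable by periodic points of any sufficiently close perturbation. Setting $\mathscr{R}_I := \bigcap_{n \in \mathbb{N}} \mathcal{C}_n$ then yields the desired residual subset. The main technical obstacle I anticipate is the upper-semicontinuity step: the impulsive semiflow is only piecewise Lipschitz and its trajectories can jump near $\partial D$, which will require combining the flow-box chart of Lemma~\ref{thm:BC} with the standing hypothesis $\text{dist}_H(D,\hat D)>0$ and a genericity argument ensuring transversality of closed orbits with $\partial \hat D$.
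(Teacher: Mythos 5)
Your proposal diverges genuinely from the paper's proof and, as written, has two gaps that cannot be waved away.

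First, the claim that openness of $\mathcal{U}_{n,k}$ is ``immediate from the definition'' does not hold. The set $\mathcal{U}_{n,k}$ is defined by a universal quantification over \emph{all} periodic points of period at most $n$ of $\psi_{X,I}$; when $X$ is perturbed to a nearby $X'$, new periodic orbits of period $\le n$ can be created that are absent for $X$, and nothing in the definition forces those new orbits to be $1/k$-permanent. So $\mathcal{U}_{n,k}$ need not be open, independently of the density problem you flag. This is exactly the difficulty that makes the Coven--Madden--Nitecki machinery necessary: one cannot quantify globally over $\text{Per}_n(\psi_{X,I})$ and expect an open condition.

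Second, your pivot to the Kuratowski argument on $\Phi_n(X)=\overline{\text{Per}_n(\psi_{X,I})}$ hinges entirely on upper semicontinuity of $\Phi_n$, which you label as ``the main technical obstacle'' and leave open. For impulsive semiflows this is not a routine check; it is the real crux. The first hitting-time map $\tau_1$ is only upper semicontinuous, and the impulsive trajectory $\gamma_x$ depends discontinuously on the vector field precisely near trajectories that approach $\partial D$. Consequently a sequence of periodic orbits $\gamma_k\in\text{Per}_n(\psi_{X_k,I})$ with $X_k\to X$ can Hausdorff-converge to a closed curve whose hitting pattern with $D$ degenerates (e.g., grazes $\partial D$) and which is not a periodic orbit of $\psi_{X,I}$; this is a genuine explosion of $\Phi_n$. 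Restricting to a dense set where no $\psi_{X,I}$-periodic orbit meets $\partial\hat D$ does not help, since the constraint is imposed on the limiting vector field while the degeneration originates along the approximating sequences, and new periodic orbits can appear under perturbation. Without upper semicontinuity, Kuratowski's theorem is unavailable and $\mathcal{C}_n$ cannot be produced.

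The paper sidesteps both difficulties by localizing in space rather than quantifying globally: fix a countable basis $(\mathcal{B}_i)$ of balls, and for each $(i,n)$ split $\mathfrak{V}_{D,\hat D}$ into $\mathscr{F}_{i,n}$ (no periodic orbit of period $\le n$ meets $\overline{\mathcal{B}_i}$) and $\mathscr{I}_{i,n}$ (there is a smaller ball $\mathcal{B}_j\subset\mathcal{B}_i$ on which the Poincar\'e map has nonzero fixed-point index and which actually contains a periodic orbit). Density of $\mathring{\mathscr{F}}_{i,n}\cup\mathring{\mathscr{I}}_{i,n}$ is obtained by perturbing to push a periodic orbit into the interior of $\mathcal{B}_i$ away from $\partial\hat D$, then applying Lemma~\ref{pert1} to make it attracting, whence by (P1) the index is nonzero and by (P2) robust. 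The residual intersection $\mathscr{R}_I=\bigcap_{i,n}(\mathring{\mathscr{F}}_{i,n}\cup\mathring{\mathscr{I}}_{i,n})$ then delivers permanence directly from (P3), with no semicontinuity of $\text{Per}_n$ required. Kuratowski's theorem does appear in the paper, but only in the subsequent step (Section~\ref{sec:proof2}) applied to the \emph{lower} semicontinuous map $X\mapsto\overline{\text{Per}(\psi_{X,I})}$ \emph{after} Proposition~\ref{perm1} has guaranteed permanence; using it to prove Proposition~\ref{perm1} itself, as you propose, is circular in spirit and blocked by the unverified upper semicontinuity.
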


\begin{proof} 
The proof exploits ideas developed in \cite{CMN}, adapted with the many subtleties of impulsive semiflows, due to possible discontinuities caused by impulses.
Fix $I\in \mathscr I_{D,\hat D}$
and let $({\mathcal{B}_i})_{i\ge 1}$ be a countable open basis of the topology on $M$, which is formed by balls whose boundaries are embedded spheres.
 For every $i, n \ge 1$, consider the pairwise disjoint subsets $\mathscr{F}_{i,n}, \mathscr{I}_{i,n}$  
of $\mathfrak{V}_{D,\hat D}$ defined as follows:
\begin{enumerate}
\item 
$\varphi \in \mathscr{F}_{i,n}$ if and only if  
$\overline{\mathcal{B}_i} \cap \bigcup_{0< t \le n} \text{Per}_t(\psi_{{\varphi},I}) =\emptyset$;
\smallskip
\item $\varphi \in \mathscr{I}_{i,n}$ if and only if there exist $\overline{\mathcal{B}_j}\subset \mathcal{B}_i$, $\text{diam}(\mathcal{B}_j) < \text{diam}(\mathcal{B}_i),$ and a local cross-section $\Sigma$ so that
\begin{enumerate}
    \smallskip
\item 
$ 
\partial{(\mathcal{B}_j\cap \Sigma)} \cap \bigcup_{0< t \le n} \text{Per}_t(\psi_{{\varphi},I}) =\emptyset;
$ 
\smallskip
\item there exists $0< t < n$ so that $(\mathcal{B}_j \cap \Sigma) \cap \text{Per}_t(\psi_{{\varphi},I})\neq\emptyset$; 
\smallskip
\item 
$\iota_{P_{\varphi,I}}(\mathcal B_j \cap \Sigma)\neq 0$, where $P$ stands for the Poincar\'e map defined in \eqref{defpoinc} for the impulsive semiflow $\psi_{{\varphi},I}$.
\end{enumerate}
\end{enumerate}

Let us denote by $\mathring{\mathscr{F}}_{i,n}$ and $\mathring{\mathscr{I}}_{i,n}$ the $\tau$-interior of the subsets $\mathscr{F}_{i,n}$ and $\mathscr{I}_{i,n}$ of $\mathfrak{V}_{D,\hat D}$, respectively. We need the following.

\medskip
\noindent
{\bf Claim:} $\mathring{\mathscr{F}}_{i,n}\cup \mathring{\mathscr{I}}_{i,n}$ is a $\tau$-open and $\tau$-dense subset of $\mathfrak{V}_{D,\hat D}$
for each $i,n \ge 1$.
\medskip

\begin{proof}[Proof of the Claim]
 Fix $\varphi \in \mathfrak{V}_{D,\hat D}$.
By definition, $\varphi$ is $\tau$-approxima\-ted by some flow $\varphi^1$ generated by a Lipschitz vector field $X^1\in \mathfrak X^{0,1}(M)$ so that $X^1\pitchfork D$ and $X^1\pitchfork \hat D$. Now, it is enough to show that if
$\varphi^1\notin \mathring{\mathscr{F}}_{i,n}$ then it can be $\tau$-approxima\-ted by some flow in 
$\mathring{\mathscr{I}}_{i,n}$.

 Indeed, if $\varphi^1\notin \mathring{\mathscr{F}}_{i,n}$ then for every $\vep>0$ there exists 
$\varphi^2 \in \mathfrak{V}_{D,\hat D}$
 such that 
 $\varphi^2$ is $\tau$-close to $\varphi^1$ and
$\overline{\mathcal{B}_i} \cap \bigcup_{0< t \le n} \text{Per}_t(\psi_{\varphi^2,I}) \neq \emptyset$.
One can use Lemma~\ref{mario} to obtain a small $\tau$-perturbation $\varphi^3 \in  \mathcal F^{0,1}(M)$  of the flow $\varphi^2$, supported on a finite number of flowboxes which intersect $D$ and $\hat D$, in such a way that  there exists a periodic orbit intersecting the interior of ${\mathcal{B}_i}$ that does not intersect $\partial \hat D$.
By Lemma~\ref{pert1} one can perform a small $\tau$-perturbation $\varphi^4\in \mathcal F^{0,1}(M)$ of the flow $\varphi^3$ such that the impulsive flow $\psi_{\varphi^4,I}$ exhibits an attracting periodic orbit which intersects the interior of $\mathcal B_i$. 

We proceed to show that there exists an open subset $\mathcal B_j\subset \mathcal B_i$ so that items (2a)-(2c) above hold for every flow $\phi \in \mathcal F^{0,1}(M)$ that is $\tau$-arbitrarily close to $\varphi^4$ (which guarantees that $\varphi^4 \in \mathring{\mathscr{I}}_{i,n}$). Indeed, by property (P1) and the existence of the periodic attracting orbit, 
$\iota_{P_{\varphi^4,I}}(\mathcal B_j \cap \Sigma)\neq 0$.  
 Then the index
$\iota_{P_{\phi,I}}(\mathcal B_j \cap \Sigma)\neq 0$ for every $\phi$ that is $\tau$-close to $\varphi^4$, by property (P2).
Finally, the existence of a periodic orbit in $\mathcal B_j$ for impulsive semiflows $\psi_{\phi,I}$, with $\phi \in \mathcal F^{0,1}(M)$ that is sufficiently close to $\varphi^4$, is guaranteed by property (P3). 
\end{proof}

\medskip
Finally we prove that the $C^0$-residual subset 
$$
\mathscr{R}_I:=\bigcap_{i,n \ge 1} [\mathring{\mathscr{F}}_{i,n}\cup \mathring{\mathscr{I}}_{i,n}] \subset \mathfrak{V}_{D,\hat D}
$$
satisfies the requirements of the proposition.
Take $\varphi \in\mathscr{R}_I$, a periodic orbit $\gamma\in \text{Per}(\psi_{{\varphi},I})$ of period $a>0$ and any $\mathcal{B}_i$ which intersects $\gamma$. 
As $\varphi \in \mathring{\mathscr{F}}_{i,n}\cup \mathring{\mathscr{I}}_{i,n}$ and $\gamma$ intersects $\mathcal B_i$ then there exist $n\ge 1$ and $\mathcal{B}_j$ 
with $\text{diam}(\mathcal{B}_j) < \text{diam}(\mathcal{B}_i)$ 
such that properties (2a)-(2c) hold. As the index is preserved by homotopy classes (recall property (P2)) then for any $\tau$-small perturbation $\phi$ of $\varphi$ one has
$\iota_{P_{\phi,I}}(\mathcal B_j \cap \Sigma)\neq 0$.
 Finally, property (P3) implies that 
$P_{\phi,I}$
 has a periodic orbit in $\mathcal B_j \cap \Sigma$, for every $\tau$-small perturbation of $\varphi$ (hence  $\psi_{\phi,I}$ has a periodic orbit close to $\gamma$ for every $\phi$ $\tau$-close to $\varphi$), which proves that $\gamma$ is permanent. 
This completes the proof of the proposition.
\end{proof}

\color{black}

The second result yields a similar consequence for impulsive semiflows parame\-terized by impulses.

\begin{proposition}\label{perm2}
Fix $\varphi\in \mathcal F^{0,1}(M)$ satisfying ~\eqref{eq:noncompact}. There exists a $C^0$-residual subset $\mathscr{R}$ of 
$\mathscr{I}_{D,\hat D}$ such that 
$$
\overline{Per(\psi_{{\varphi},I})} \cap \mathring{D} = 
\overline{\mathscr{P}(\psi_{{\varphi},I})}\cap \mathring{D}, \quad\text{for any $I \in \mathscr{R}$. }
$$
 \end{proposition}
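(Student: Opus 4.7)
The strategy is to mimic the proof of Proposition~\ref{perm1}, replacing $C^0$-perturbations in $\mathfrak{V}_{D,\hat D}$ by $C^0$-perturbations in $\mathscr{I}_{D,\hat D}$, and using the closing lemma for impulses (Lemma~\ref{cl2}) in place of Lemma~\ref{cl2vectors}. I fix a countable open basis $(\mathcal{B}_i)_{i\ge 1}$ of $M$ consisting of balls with embedded-sphere boundaries, and for each pair $i,n\ge 1$ define two disjoint subsets of $\mathscr I_{D,\hat D}$:
\begin{align*}
\mathscr{F}_{i,n} &= \Big\{I \in \mathscr{I}_{D,\hat D} : \overline{\mathcal{B}_i} \cap \bigcup_{0<t\le n} \text{Per}_t(\psi_{X,I}) = \emptyset \Big\},\\
\mathscr{I}_{i,n} &= \Big\{I \in \mathscr{I}_{D,\hat D} : \exists \; \overline{\mathcal{B}_j} \subset \mathcal{B}_i  \text{ satisfying (2a)--(2c) of the proof of Proposition~\ref{perm1} } \Big\},
\end{align*}
where (2a)--(2c) are to be interpreted with respect to the Poincar\'e map of $\psi_{X,I}$ (as in \eqref{defpoinc}) on a local cross-section $\Sigma \subset D$ through a periodic orbit meeting $\mathcal{B}_j \cap \mathring D$. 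The candidate residual set is
$$
\mathscr{R}_X := \bigcap_{i,n\ge 1} \big[\mathring{\mathscr{F}}_{i,n} \cup \mathring{\mathscr{I}}_{i,n}\big].
$$

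The heart of the proof is the density claim: $\mathring{\mathscr{F}}_{i,n} \cup \mathring{\mathscr{I}}_{i,n}$ is $C^0$-dense in $\mathscr{I}_{D,\hat D}$. Fix $I \notin \mathring{\mathscr{F}}_{i,n}$, so that arbitrarily $C^0$-close to $I$ there is $J_0 \in \mathscr{I}_{D,\hat D}$ admitting a periodic orbit in $\overline{\mathcal{B}_i}$. If this orbit does not meet $\mathring D$ then any nearby periodic orbit of $\psi_{X,J_0}$ lying in $\mathring D$ can be approximated using the closing lemma for impulses (Lemma~\ref{cl2}) applied to some nearby non-wandering point whose orbit meets $D$; so we may assume that the periodic orbit of $\psi_{X,J_0}$ provided by the closing lemma intersects $\mathcal{B}_i \cap \mathring D$ and avoids $\partial D \cup \partial \hat D$. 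Denote a point in $D$ on this orbit by $q_0$, with period $k$ under $f_{X,J_0}$.

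The main obstacle is to establish the impulse analog of Lemma~\ref{pert1}: by a further $C^0$-small perturbation $J_1$ of $J_0$, make $q_0$ an attracting fixed point of $f^k_{X,J_1}$. The construction will proceed as follows. Since $\gamma_{q_0}$ avoids $\partial D \cup \partial \hat D$, choose a small closed disk $\mathcal D \subset D$ around $q_0$ so that the $k$-th iterate $g := f^k_{X,J_0}$ is defined and continuous on a neighborhood of $\mathcal D$ with $g(q_0)=q_0$. Write $\tilde q_0 = J_0(q_0) \in \hat D$ and choose a small closed disk $\mathcal E \subset \mathring{\hat D}$ around $\tilde q_0$ whose preimage under the last leg of the Poincar\'e return lies well inside $\mathcal D$. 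Exploiting that $\dim \hat D = d-1 \ge 2$, one builds a homeomorphism $\zeta: \hat D \to \hat D$ isotopic to the identity, equal to the identity outside a small neighborhood of $\tilde q_0$, and collapsing a preassigned neighborhood of $\tilde q_0$ strongly into a small ball around $\tilde q_0$. Setting $J_1 = \zeta \circ J_0$ yields an impulse $C^0$-close to $J_0$ for which $f^k_{X,J_1} = I^{-1} \circ P^{(k-1)}_{X,J_0} \circ \zeta \circ P_{X,J_0} \circ I$ restricted to $\mathcal D$ is a topological contraction with attracting fixed point at $q_0$. Then property (P1) gives $\iota_{P_{X,J_1}}(\mathcal{B}_j \cap \Sigma) \neq 0$ for a well-chosen small $\mathcal{B}_j \subset \mathcal{B}_i$.

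Conditions (2a)--(2c) then hold at $J_1$, and by property (P2) the non-zero index is preserved under further $C^0$-small perturbations of the impulse, so $J_1 \in \mathring{\mathscr{I}}_{i,n}$. This proves the density claim. The conclusion now follows exactly as in Proposition~\ref{perm1}: for any $I \in \mathscr R_X$ and any periodic orbit $\gamma \in \text{Per}(\psi_{X,I})$ with $\gamma \cap \mathring D \neq \emptyset$, choosing a basis element $\mathcal{B}_i$ meeting $\gamma \cap \mathring D$ and $n$ larger than the period, one must have $I \in \mathring{\mathscr{I}}_{i,n}$, so property (P3) combined with (P2) produces a periodic orbit of $\psi_{X,J}$ near $\gamma$ for every $J \in \mathscr I_{D,\hat D}$ that is $C^0$-close to $I$, showing $\gamma \in \mathscr P(\psi_{X,I})$ and completing the proof.
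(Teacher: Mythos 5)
Your density claim fails, and the failure is not cosmetic but reflects exactly the reason the statement of the proposition is restricted to $\mathring{D}$. You take $(\mathcal{B}_i)$ to be a basis of $M$, but $C^0$-perturbations of the impulse $I$ have no effect whatsoever on orbits of $\psi_{X,I}$ whose closure avoids $D$: such orbits are orbits of the unperturbed flow $\varphi$ with $\tau_1\equiv\infty$, and they coincide for $\psi_{X,I}$ and $\psi_{X,J}$ for \emph{every} pair $I,J\in\mathscr I_{D,\hat D}$. In Example~\ref{ex:TBA} the cylinder $S$ is filled by non-hyperbolic $\varphi$-periodic circles disjoint from $D$; for a basis element $\mathcal{B}_i\subset M$ meeting $S$, every impulse fails the ``Poincar\'e index nonzero'' condition (2c), and no impulse can be pushed into $\mathring{\mathscr{F}}_{i,n}$ either, because the circles persist for all nearby impulses. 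So $\mathring{\mathscr{F}}_{i,n}\cup\mathring{\mathscr{I}}_{i,n}$ is \emph{empty} near such $I$, and your $\mathscr R_X$ need not be residual.

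The paper circumvents this precisely by choosing $(\mathcal{B}_i)$ to be a countable basis of $\mathring{D}$ with $\overline{\mathcal{B}_i}\subset\mathring{D}$, so that only orbit data near the impulsive region enters the decomposition — and those are exactly the orbits the closing lemma for impulses (Lemma~\ref{cl2}) and an impulse-side analogue of Lemma~\ref{pert1} can actually manipulate. This forces two further changes you did not make. First, since periodic orbits of $\psi_{X,I}$ never meet $D$ (only their closures do), the sets $\text{Per}_t(\psi_{X,I})$ must be replaced by their closures $\overline{\text{Per}_t(\psi_{X,I})}$ in the definitions of the two families; otherwise condition (1) is vacuously true for every $I$ and the decomposition collapses. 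Second, the index condition should be stated for the first-return map $f_{X,I}=I^{-1}\circ P_{X,I}\circ I$ on $D$ from \eqref{eq:deffI}, not for the Poincar\'e map on some ambient cross-section $\Sigma$, since $\mathcal{B}_j\subset\mathring{D}$ is already a codimension-one disk and $f_{X,I}$ is the natural map on it.

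Finally, your closing paragraph asserts the conclusion for periodic orbits $\gamma$ with ``$\gamma\cap\mathring{D}\neq\emptyset$''; this set is always empty, so as written the final step proves nothing. The correct condition is $\overline{\gamma}\cap\mathring{D}\neq\emptyset$, and the argument should be phrased in terms of an arbitrary point $p\in\overline{\text{Per}(\psi_{X,I})}\cap\mathring{D}$ and a basis element $\mathcal{B}_i\subset\mathring{D}$ containing $p$. Your sketch of the impulse-side contraction (postcomposing $J_0$ by a homeomorphism $\zeta$ of $\hat D$ collapsing a neighborhood of $\tilde q_0$) is the right idea for producing an attracting fixed point of $f_{X,J_1}^k$; it matches what the paper does implicitly when it says a further $C^0$-small perturbation of the impulse produces an attracting periodic orbit.
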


\begin{proof}
\medskip

The argument is similar to the one of Proposition~\ref{perm1}, with a decomposition formed on the space of impulsive maps.  For that reason we shall emphasize the main modifications.  Let $({\mathcal{B}_i})_{i\ge 1}$ be a countable open basis of the topology on $\mathring{D}$, which is formed by balls whose boundaries are embedded spheres and $\overline{{\mathcal B}_i}\subset\mathring{D}$
for every $i\ge 1$.
For every $i, n \ge 1$, consider the pairwise disjoint subsets $\mathscr{E}_{i,n}, \mathscr{H}_{i,n}$  
of $\mathscr{I}_{D,\hat D}$ defined as follows:
\begin{enumerate}
\item 
$I \in \mathscr{E}_{i,n}$ if and only if  
$\overline{\mathcal{B}_i} \cap \overline{\bigcup_{0< t \le n} 
\text{Per}_t(\psi_{{\varphi},I})} =\emptyset 
$;
\smallskip
\item $I \in \mathscr{H}_{i,n}$ if and only if there exists $\overline{\mathcal{B}_j}\subset \mathcal{B}_i$, $\text{diam}(\mathcal{B}_j) < \text{diam}(\mathcal{B}_i)$ so that
\begin{enumerate}
    \smallskip
\item 
$ 
\partial{\mathcal{B}_j} \cap \overline{\bigcup_{0< t \le n} \text{Per}_t(\psi_{{\varphi},I})} =\emptyset;
$ 
\smallskip
\item there exists $0< t < n$ so that $\mathcal{B}_j \cap \overline{\text{Per}_t(\psi_{{\varphi},I})}\neq\emptyset$; 
\smallskip
\item 
$\iota_{f_{\varphi,I}}(\mathcal B_j)\neq 0$, where $f_{\varphi,I}$ is given by ~\eqref{eq:deffI}.
\end{enumerate}
\end{enumerate}

Let $\mathring{\mathscr{E}}_{i,n}$ and $\mathring{\mathscr{H}}_{i,n}$ be the $C^0$-interior of the subsets $\mathscr{E}_{i,n}$ and $\mathscr{H}_{i,n}$ of $\mathscr{I}_{D,\hat D}$, respectively. 
We claim that $\mathring{\mathscr{E}}_{i,n}\cup \mathring{\mathscr{H}}_{i,n}$ is a $C^0$-open and dense subset of $\mathscr{I}_{D,\hat D}$
for each $i,n \ge 1$.
Indeed, if $I\notin \mathring{\mathscr{E}}_{i,n}$ then it can be $C^0$-approximated by $I_0\in \mathscr{I}_{D,\hat D}$ so that $\overline{\mathcal{B}_i} \cap \overline{\bigcup_{0< t \le n} \text{Per}_t(\psi_{\varphi,I_0})} \neq \emptyset$.
Performing a $C^0$-arbitrary small perturbation $I_1\in \mathscr{I}_{D,\hat D}$ of $I_0$ one guarantees that $\psi_{\varphi,I_1}$ exhibits an attracting periodic orbit 
(cf. Lemma~\ref{cl2}) which intersects the interior of $\mathcal B_i$
(hence it is contained in $\mathring{D})$. 
Properties (P1)-(P3), applied to $f_{\varphi,I_1}$ ensure that  $I_1 \in \mathring{\mathscr{H}}_{i,n}$. 

Finally we are left to show that the 
$C^0$-residual subset 
$$
\mathscr{R}:=\bigcap_{i,n \ge 1} [\mathring{\mathscr{E}}_{i,n}\cup \mathring{ \mathscr{H}}_{i,n}] \subset \mathscr{I}_{D,\hat D}
$$
fulfills the requirements of the proposition. 
Since this proof
follows closely the argument used in the proof of the $C^0$-genericity of $\mathscr{R}_I$ (cf. proof of  Proposition~\ref{perm1}) we leave it as a simple exercise to the reader.
\end{proof}
\color{black}

\begin{remark}
In \cite{STV1} we proved that, given a smooth flow $(\varphi_t)_t$ for a $C^1$-generic impulsive map the periodic points whose closure intersects $D$ are all hyperbolic. This guarantees that a $C^0$-dense subset of impulsive maps have only permanent periodic orbits intersecting the closure of $D$. The similar argument could not be used in the proofs of Propositions~\ref{perm1} and ~\ref{perm2} above, as the flow is assumed to be only Lipschitz continuous and impulsive maps are only assumed to be homeomorphisms.   
\end{remark}


\section{$C^0$-generic impulses}
\label{sec:proof1}

This section is devoted to the proof of Theorems~\ref{thmA}  on the properties of impulsive semiflows for $C^0$ typical impulses. 
Let $\varphi$ be a Lipschitz continuous flow generated by  $X\in \mathfrak{X}^{0,1}(M)$ and 
$D,\hat D\subset M$ be 
smooth codimension one submanifolds transversal to the flow 
such that hypothesis ~\eqref{eq:noncompact} holds.

We proceed to show that there exists a Baire generic subset 
$\mathscr R_\varphi
\subset \mathscr I_{D,\hat D}$
of impulsive maps
such that every
$I \in \mathscr R_\varphi$ 
 satisfies
\begin{equation}
\label{eq:thmAeq21}
\overline{Per(\psi_{{\varphi},I})} \cap \mathring{D} = {\Omega(\psi_{{\varphi},I}) \cap \mathring{D}}\,. 
    \end{equation}

First, according to Proposition ~\ref{perm2},
there exists a $C^0$-residual subset $\mathscr{R}^0_\varphi$ of 
$\mathscr{I}_{D,\hat D}$ such that 
$ 
\overline{Per(\psi_{{\varphi},I})} \cap \mathring{D} = 
\overline{\mathscr{P}(\psi_{{\varphi},I})}\cap \mathring{D}, \text{for any $I \in \mathscr{R}^0_\varphi$. }
$
This guarantees that the map 
$$
\Gamma: \mathscr{I}_{D,\hat D} \to 2^M
\quad\text{given by}\quad
\Gamma(I)=\overline{\text{Per}(\psi_{{\varphi},I})} \cap \mathring{D}
$$
is lower semicontinuous in 
$\mathscr{R}^0_\varphi$. 
The continuity points of 
$\Gamma|_{\mathscr{R}^0_\varphi}$
form a $C^0$-residual subset 
$\mathscr{R}_\varphi\subset \mathscr{R}^0_\varphi$ (see \cite{Ku}).

\medskip
Fix $I\in \mathscr{R}_\varphi$.
We claim that \eqref{eq:thmAeq21} holds.  Assume otherwise, that there exists a point
$p\in [\Omega(\psi_{{\varphi},I})\cap \mathring{D}] \setminus [\overline{\text{Per}(\psi_{{\varphi},I})}\cap \mathring{D}]$.
A $C^0$-small perturbation $I_1$ of $I$ guarantees 
that $\psi_{\varphi,I_1}$ admits an attracting periodic point $x$ close to $I_1(p)$ (recall Lemma~\ref{cl2}). 
 Hence its orbit $\gamma$ is a permanent periodic orbit for $\psi_{\varphi,I_1}$. 


By persistence of permanent periodic orbits and denseness of the $C^0$-Baire generic subset $\mathscr{R}_\varphi$ there exists $I_2\in \mathscr{R}_\varphi$ that is $C^0$-close to $I_1$ and so that $\psi_{\varphi,I_2}$ has a periodic orbit $q$ arbitrarily close to $p$. This is a contradiction with the fact that all impulsive maps in $\mathscr R_\varphi$ are continuity points of the map $\Gamma\mid_{\mathscr{R}^0_\varphi}$.

\section{$\tau$-generic flows}
\label{sec:proof2}

This section is devoted to the proof of Theorem~\ref{thmA-flows}. 
Throughout this section let  $D$ be a smooth submanifold of codimension one and
$I\in \mathscr I_{D,\hat D}$
be an impulsive map.
The proof of Theorem~\ref{thmA-flows}  builds over the fact that there exists a $\tau$-Baire generic subset 
$\mathscr{R}_I^0\subset \mathfrak{V}_{D,\hat D}$
such that 
$$
\text{Per}(\psi_{{\varphi},I})=\mathscr{P}(\psi_{{\varphi},I}), \quad \text{for any $\varphi \in \mathscr{R}_I^0$}
$$
which has been proved in Proposition~\ref{perm1}.
 This guarantees that the map $\Gamma$ defined by  $\Gamma(\varphi)=\overline{\text{Per}(\psi_{{\varphi},I})}$ for $\varphi \in \mathfrak{V}_{D,\hat D}$
 is lower semicontinuous in 
$\mathscr{R}_I^0$.
 In particular, the continuity points of 
$\Gamma|_{\mathscr{R}_I^0}$ 
 form a residual subset 
$\mathscr{R}_I\subset \mathscr{R}_I^0$.

\medskip
We claim that $\Omega(\psi_{{\varphi},I})=\overline{\text{Per}(\psi_{{\varphi},I})}$ for every 
$\varphi \in \mathscr{R}_I$.
Indeed, if this was not the case then choose 
$p\in \Omega(\psi_{{\varphi},I}) \setminus \overline{\text{Per}(\psi_{{\varphi},I})}$.
Given an $\varepsilon>0$ there exists a $\tau$-$\varepsilon$-perturbation  $\varphi^{\vep} \in \mathcal{F}^{0,1}(M)$  of $\varphi$, generated by a vector field $Y_{\vep} \in \mathfrak{X}^{0,1}(M)$ so that $Y_{\vep}\pitchfork D$ and   $Y_{\vep} \pitchfork \hat{D}$, a point $q_{\vep} \in M$ and $t_{\vep}>0$ so that $d(p, q_\vep) < \vep$, $d(p, \varphi^{\vep} (t_\vep, q_\vep) ) < \vep$ and $d(\varphi^{\vep} (t_\vep, q_\vep) , q_\vep) < \vep$.

The transversality of a given vector field to the compact cross sections $\bar D$ and $\bar{\hat D}$ is a $C^0$-open condition. This, combined with Lemma~\ref{cl2vectors}, implies that there exists a $C^0$-small perturbation $Y_1\in \mathfrak{V}_{D,\hat D}$ of $X$ such that $p$ is a periodic orbit for $\psi_{\varphi^{1},I}$
(we will denote by $\varphi^{i}$ the Lipschitz flow generated by $Y_i$). Under a small $C^0$-perturbation $Y_2$ of $Y_1$ we can guarantee that $p$ is a permanent periodic orbit for 
$\psi_{\varphi^{2},I}$ (recall Lemma~\ref{pert1}). By Gronwall's inequality the flow $\varphi_2$ is $\tau$-close to $\varphi$. Moreover, by persistence of permanent periodic orbits and denseness of the $\tau$- generic subset 
$\mathscr{R}_I$,
we can perform a $C^0$-small perturbation $Y_3$ of $Y_0$ in such a way that 
$\psi_{\varphi^{3},I}$
has a periodic orbit $q$ arbitrarily close to $p$. This is a contradiction with the fact that 
all flows in
$\mathscr{R}_I$
are continuity points of the map 
$\Gamma\mid_{\mathscr{R}_I^0}$.
This finishes the proof.

\section{Examples}\label{sec:examples}

In this section we shall provide  simple  examples of the main results in order to illustrate possible applications. The first application concerns the class of impulsive Anosov flows.

\begin{example} ($C^0$-generic impulsive Anosov flows)
    Let $X$ be a $C^1$-vector field on a compact three-dimensional compact manifold generating a transitive Anosov flow $\varphi$, let 
$D, \hat D$ be codimension one smooth submanifolds of $M$ transversal to $X$ and $I\in \mathscr{I}_{D,\hat D}$.

Let 
$\mathcal U \subset  \mathscr{F}^{0,1}(M)$
be a small open neighborhood of 
$\varphi$
(in the $C^0$ topology) 
such that 
$\mathcal U \subset \mathfrak{V}_{D,\hat D}$.
Even though it is not known whether all flows 
 in $\mathcal U$ are equivalent, hence have a dense set of periodic orbits, by the proof of Theorem~\ref{thmA-flows}, 
periodic points are all permanent and 
the set of periodic points is dense in the non-wandering set, for a $C^0$-generic choice of 
$\phi\in \mathcal U$.

Furthermore, Theorem~\ref{thmA} implies that the set of (permanent) periodic orbits is dense in the impulsive non-wandering set for a $C^0$-generic choice of the impulses 
$I\in \mathscr{I}_{D,\hat D}$. 
\end{example}

\begin{example}(Perturbations of minimal flows) \label{ex:minimal}
Let $\varphi$ be a minimal flow on $\mathbb T^3$ generated by a $C^1$ vector field $X\in \mathfrak{X}^{1}(\mathbb T^3)$ and 
$D,\hat D\subset M$ be 
smooth codimension one 
closed
submanifolds transversal to the flow.
 The flow $\varphi$ has no periodic orbits. Nevertheless, Theorem~\ref{thmA} ensures that for a $C^0$-generic choice of the impulse 
$I\in \mathscr I_{D,\hat D}$,
  the impulsive semiflow $\psi_{{\varphi},I}$ has a dense set of periodic orbits in the impulsive non-wandering set. 
  Moreover, Theorem~\ref{thmA-flows} ensures that permanent periodic orbits are dense in the whole non-wandering set.

\end{example}

\medskip

Finally, we finish this section by considering a class of impulsive Lorenz attractors, and prove that even though the impulsive semiflow may not 
exhibit partial hyperbolicity one can still prove that for a typical impulsive Lorenz attractor 
the periodic orbits are dense 
in the non-wandering set.

\begin{example}(Impulsive flows derived from geometric Lorenz attractors)\label{ex:Lorenz}
 A vector field $X\in\mathscr{X}^r(M^3)$ ($r\geq1$) exhibits a \emph{geometric Lorenz attractor} $\Lambda$
 if $X$ has an attracting region $U\subset M^3$ such that  $\Lambda=\bigcap_{t>0}\phi^X_t(U)$ is a singular hyperbolic attractor and satisfies:
	\begin{itemize}
		\item[(i)] There exists a unique singularity $\sigma\in\Lambda$ with three exponents $\lambda_1<\lambda_2<0<\lambda_3$, which satisfy $\lambda_1+\lambda_3<0$ and $\lambda_2+\lambda_3>0$, whose eigenspaces (in local coordinates $x_1, x_2, x_3$ in $\mathbb R^3$) are identified with the canonical axis.  
		\item[(ii)] $\Lambda$ admits a $C^r$-smooth cross section which in local coordinates can be written as
		$\Sigma=[-1,1]  \times [-1,1]  \times \{1\}$ and
		for every $z\in U\setminus W^s_{\it loc}(\sigma)$, there exists $t>0$ such that $\phi_t^X(z)\in\Sigma$
		(here $W^s_{\it loc}(\sigma)$ stands for the stable manifold of the hyperbolic singularity $\sigma$)
		\item[(iii)] With the identification
		of $\Sigma$ with $[-1,1]\times[-1,1]$ (by a $C^1$-diffeomorphism) where $l=\{0\}\times[-1,1]=W^s_{\it loc}(\sigma)\cap\Sigma$, 
		the Poincar\'e map $P:\Sigma\setminus l\rightarrow\Sigma$ is a skew-product map
		$$
		P(x,y)=\big( f(x)~,~H(x,y) \big), \qquad \forall(x,y)\in[-1,1]^2\setminus l
		$$
		where
		\begin{itemize}
			\item $H(x,y)<0$ for $x>0$, and $H(x,y)>0$ for $x<0$; 
			\item $\sup_{(x,y)\in\Sigma\setminus l}\big|\partial H(x,y)/\partial y\big|<1$, 
			and
			$\sup_{(x,y)\in\Sigma\setminus l}\big|\partial H(x,y)/\partial x\big|<1$; 
			\item the one-dimensional quotient map $f:[-1,1]\setminus\{0\}\rightarrow[-1,1]$ is $C^1$-smooth and satisfies
			$\lim_{x\rightarrow0^-}f(x)=1$,
			$\lim_{x\rightarrow0^+}f(x)=-1$, $-1<f(x)<1$ and
			$f'(x)>\sqrt{2}$ for every $x\in[-1,1]\setminus\{0\}$.
		\end{itemize}
	\end{itemize}


\smallskip
The Poincar\'e map $P$ of the geometric Lorenz attractors can be written as a composition $P=P_2\circ P_1$, where 
$P_1: D \to \hat D$ and $P_2: \hat D \to D$ are Dulac maps, i.e.
the first hitting time maps for the flow $(\phi^X_t)$ between $D:=\Sigma\setminus l$ and $\hat D=\{\phi^X_{h(x)}(x) \colon x \in D\}$, where 
$h(x)=\inf\{t>0 \colon \phi^X_{h(x)}(x) \in \{|x_1|=1\}\}$.

Theorems~\ref{thmA} and~\ref{thmA-flows} guarantee that there exist permanent periodic orbits under 
generic perturbations 
of $P_1$
and the flow, respectively.
\end{example}

\section{Final comments}
\label{sec:finalc}

\subsection*{Volume preserving impulsive semiflows}

A particular interesting framework, with many physical motivations, concerns the case of volume preserving dynamical systems. One can  define a volume preserving impulsive semiflow 
as one generated by a volume preserving flow and an impulsive map $I: D\to \hat D$ such that $\text{Leb}_{\hat D}(\cdot)=\text{Leb}_{D}(I^{-1}(\cdot))$, where $\text{Leb}_{D}$ and $\text{Leb}_{\hat D}$ denote the induced volume measures on the submanifolds $D,\hat D$ of $M$. 
Assume that 
$\varphi$
is a volume preserving Lipschitz continuous  flow on a smooth compact manifold of dimension larger or equal than $3$ and that $D,\hat D$ are smooth cross-sections transversal to the flow.  
In particular, in view of \cite{OU} and the main results of this paper one expects the following questions to have an affirmative answer:

\medskip
\noindent
{\bf Question 1.} If the flow 
$\varphi$
is ergodic does a $C^0$-generic impulse $I$ generate an ergodic impulsive semiflow $\psi_{{\varphi},I}$? Moreover, if $I :D \to \hat D$ preserves volume does a $C^0$-generic flow $ \varphi \in \mathfrak V_{D,\hat D}$ generate an ergodic impulsive semiflow $\psi_{{\varphi},I}$?
\medskip


We also expect that the main results of this paper (Theorems~\ref{thmA} and ~\ref{thmA-flows}) may find a parallel for $C^0$-typical volume preserving impulsive semiflows.


\subsection*{Shadowing and gluing orbit properties}
It is known that $C^0$-generic homeomorphisms satisfy the shadowing property (see e.g. \cite{Akin3} and references therein). The latter, in presence of the denseness of periodic orbits, implies on the gluing orbit property, which itself ensures a strong transitivity, denseness of ergodic measures on the simplex of invariant measures, among several strong recurrence results (cf. \cite{BTV,BoTV}).  Let us recall some ingredients.
Recall that, given real numbers $\delta>0$ and $T > 1$, we say that a pair of sequences $[x_i, t_i]_{i\in \mathbb{Z}}$,  where $x_i \in M$, $t_i \in \R$, $1\leq t_i \leq T$, 
is a $(\delta,T)$-{\em pseudo-orbit} of  $\psi_{{\varphi},I}$ if 
$$
d(\gamma_{x_i}(t_i),x_{i+1}) < \delta \,\,\text{for all }\,\,i \in \mathbb{Z}.
$$
For the sequence $(t_i)_{i \in \Z}$ we write,
$\sigma(n)=t_0+t_1+\ldots+t_{n-1}$ if $n>0$, $\sigma(n)=-(t_n+\ldots+t_{-2}+t_{-1})$ if $n<0$ and $\sigma(0)=0$. 
Let $x_0 \star t$ denote a point on a $(\delta,T)$-chain $t$ units time from $x_0$. More precisely, for $t \in \R$,
$x_0 \star t=\gamma_{t-\sigma(i)}(x_i) \hspace{0.4cm} {\text{\rm if}} \hspace{0.4cm} \sigma(i) \leq t <\sigma(i+1).$
By $\mbox{Rep}$ we denote the set of all increasing homeomorphisms $\zeta\colon \R \rightarrow \R$,
called {\em reparametrizations}, satisfying $\zeta(0)=0$. Fixing $\varepsilon>0$, we define the set
$$
\mbox{Rep}(\varepsilon)=\left\{\zeta \in \mbox{Rep}: \left|\frac{\zeta(t) -\zeta(s)}{t-s}-1 \right|<\varepsilon, \, s, t \in \R \right\},
$$ 
of the reparametrizations $\varepsilon$-close to the identity.
%
 The impulsive semiflow $\psi_{{\varphi},I}$ satisfies the
\emph{shadowing property} 
if, for any $\varepsilon>0$ and $T > 1$ there exists $\delta=\delta(\varepsilon, T)>0$ such that for any 
 $(\delta,T)$-pseudo-orbit  $[x_i, t_i]_{i \in \Z}$ there is 
 $\tilde x \in M$ and a reparametrization $\zeta \in \mbox{Rep}(\varepsilon)$ such that 
$
d(\gamma_{\tilde x}(\zeta(t)),x_0 \star t)<\varepsilon,\text{ for every } t \in \R.
$ 


\medskip
While the shadowing property turns out to be $C^0$-generic in the space of continu\-ous flows generated by Lipschitz continuous vector fields \cite{BTV} this is no longer true for impulsive semiflows, as shown in the next example.

\begin{example}
Assume that the vector field $X\in \mathfrak{X}^1(\mathbb S^2)$ generates the usual north-pole south-pole flow 
on the sphere $\mathbb S^2$, that $D\subset \mathbb S^2$ denotes the meridian, $\hat D=\varphi_{-1}(D)$ and $\tau(x)=\sup\{|t| \colon\varphi_t(x) \notin D\}$. Then, given a small $\vep>0$, for every vector field $Y\in \mathfrak{X}^{0,1}(\mathbb S^2)$ that is $C^0$-close to $X$, any impulsive map $I\in \mathscr{I}_{D,\hat D}$ and $\delta>0$, it is simple to check that the following $\delta$-pseudo-orbit $[x_i,t_i]_{i=1,2}$ is not $\vep$-shadowed: it is enough to take any regular point $x_1$ in the interior of the upper hemisphere, $t_1=\tau(x_1)-\frac\delta{10 \|X\|_\infty}$, $x_2=\varphi_{\tau(x_1)+\frac\delta{10 \|X\|_\infty}}(x_1)$ and $t_2=1$. The absence of shadowing holds by $C^0$-perturbations of the Lipschitz vector field and impulsive map $I$.
\end{example}

The previous simple example clarifies that, in order for some shadowing property to hold for typical impulsive semiflows, it may be necessary to consider special classes of pseudo-orbits.
This justifies the following:

\medskip
\noindent
{\bf Question 2.} Does a typical impulsive semiflow (parameterized by either the space of impulsive maps or flows) satisfy a shadowing property for some relevant class of pseudo-orbits?

\bigskip

{\bf Acknowledgements.} 
Part of this work was done during the visit of the third named author to UFRJ and during the visit of the fourth named author to UFRJ and PUC-Rio, whose research environment and research conditions are acknowledged.

MB was partially financed by Portuguese Funds through FCT (Funda\c c\~ao para a Ci\^encia e a Tecnologia) through the research Project
 UID/04561/2025 (https:// doi.org/10.54499/UID/04561/2025).

JS was funded by
the grant E-26/010/002610/2019, Rio de Janeiro Research Foundation (FAPERJ), and by the Coordena\c c\~ao de Aperfei\-çoamento de Pessoal de N\'ivel Superior - Brasil (CAPES), Finance Code 001.

MJT was partially financed by Portuguese Funds through FCT (Funda\c c\~ao para a Ci\^encia e a Tecnologia) through the research Project
 UID/00013/2025 (https:// doi.org/10.54499/UID/00013/2025).

PV was partially supported by CIDMA, through FCT (Funda\c c\~ao para a Ci\^encia e a Tecnologia), I.P., under the projects with references 
 UID/04106/2025 (https://doi.org/\-10.54499/UID/04106/2025)
 and UID/PRR/04106/2025
(https://doi.org/\-10.54499/\-UID/\-PRR/04106/2025).

\end{document}